\newcommand{\add}{\mathrm{add}}
\newcommand{\M}{\mbox{\rm Mod-$R$}}
\newcommand{\Z}{\mathbb{Z}}
\DeclareMathOperator{\Hom}{Hom}
\DeclareMathOperator{\End}{End}
\DeclareMathOperator{\Ext}{Ext}
\DeclareMathOperator{\Tor}{Tor}
\newcommand{\Dcal}{\ensuremath{\mathcal{D}}}
\newcommand{\Ycal}{\ensuremath{\mathcal{Y}}}
\newcommand{\Xcal}{\ensuremath{\mathcal{X}}}
\newcommand{\tube}{\ensuremath{\mathbf{t}}}
\newcommand{\Tria}{\mathrm{Tria\,}}
\theoremstyle{plain}
\newtheorem{thm}{Theorem}[section]
\newtheorem{prop}[thm]{Proposition}
\newtheorem{lem}[thm]{Lemma}
\newtheorem{cor}[thm]{Corollary}
\newtheorem{ex}[thm]{Example}
\theoremstyle{definition}
\newtheorem{defn}[thm]{Definition}
\theoremstyle{remark}
\newcommand{\ra}{\rightarrow}
\newcommand{\ten}{\otimes}
\newcommand{\lten}{\overset{\mathbf{L}}{\ten}}
\newcommand{\mcx}{\mathcal{X}}
\begin{document}
\begin{center}
{\bf On the uniqueness of stratifications of derived module categories}
\bigskip

{\sc Lidia Angeleri  H\" ugel, Steffen Koenig, Qunhua
Liu}

\bigskip

(Version of \today)
\end{center}

\address{Lidia Angeleri  H\" ugel
\\ Dipartimento di Informatica - Settore Matematica
\\ Universit\`a degli Studi di Verona
\\ Strada Le Grazie 15 - Ca' Vignal 2
\\ I - 37134 Verona, Italy}
\email{lidia.angeleri@univr.it}
\address{Steffen Koenig, Qunhua Liu\\ Institut f\"ur Algebra und
Zahlentheorie \\ Universit\"at Stuttgart \\ Pfaffenwaldring 57 \\
D-70569 Stuttgart, Germany}
\email{skoenig@mathematik.uni-stuttgart.de,
qliu@mathematik.uni-stuttgart.de}
\thanks{First named author acknowledges
support by  MIUR, PRIN-2008 "Anelli, algebre, moduli e categorie" and by
the DGI and the
European Regional Development Fund, jointly, through Project
 MTM2005--00934, and by the Comissionat per Universitats i Recerca
of the Generalitat de Ca\-ta\-lunya, Project 2005SGR00206. }
\date{\today}

\bigskip
\begin{quote}
{\footnotesize {\bf Abstract.} Recollements of triangulated
categories may be seen as exact sequences of such categories.
Iterated recollements of triangulated categories are
analogues of geometric or topological stratifications and of composition
series of algebraic objects. We discuss
the question of uniqueness of such a stratification, up to
ordering and derived equivalence, for derived module categories.
The main result is a positive answer in the form of a Jordan H\"older
theorem for derived module categories of hereditary artin
algebras. We also provide examples of derived simple rings. \medskip \\ 
{\bf Keywords:} recollement; striatification; derived simplicity; Jordan H\"older theorem; hereditary artin algebras.}
\end{quote}

\vspace{4ex}

\section{Introduction} \label{sectionintro}

Classical Jordan H\"older theorems in group theory or in representation
theory assert that under some finiteness assumptions a given group, module
or representation has a finite composition series with simple factors and
that the simple factors are unique up to ordering and isomorphism. This
article proves a new kind of Jordan H\"older theorem, for derived categories
of hereditary artin algebras, that is for certain triangulated
categories.

\medskip

The ingredients of a Jordan H\"older theorem are the terms {\em composition
series} and {\em simple} objects. A finite composition series is a succession
of short exact sequences. A simple object is not allowed to
be the middle term of any non-trivial
short exact sequence. We propose to view recollements of triangulated
categories as analogues of short exact sequences. Hence iterated recollements,
frequently called {\em stratifications}, are analogues of composition series.
It then may seem natural to call a triangulated category simple if it does not
admit a non-trivial recollement by triangulated categories. We will see,
however, that the choice of definition is a subtle point - like in geometry
it is crucial to decide which kind of subobjects or factor objects are to occur
in stratifications. For derived categories of rings
it turns out to be reasonable to call a derived category simple
if it does not admit a non-trivial recollement, whose factors again are
derived categories of rings. In this sense, the main result of this article
is:
\medskip

{\em {\bf Main Theorem \ref{maintheorem}.}
The (unbounded) derived module category of a hereditary artin
algebra admits a finite composition series, and the simple factors in a
composition series are unique up to ordering and equivalence of triangulated
categories.}
\medskip

The recollements of triangulated categories used here as analogues of exact
sequences describe the middle term by a triangulated subcategory and
a triangulated quotient category. Recollements have been first defined by
Beilinson, Bernstein and Deligne \cite{BBD} in geometric contexts, where
stratifications of spaces imply recollements of derived categories of sheaves,
by using derived versions of Grothendieck's six functors -
whose abstract properties in fact get axiomatized by
the notion of recollement. As certain derived categories of perverse sheaves
are equivalent to derived categories of modules over blocks of the
Bernstein-Gelfand-Gelfand category $\mathcal O$, recollements do exist for
the corresponding algebras as well. For these algebras,
the stratification provided by
iterated recollements, is by derived categories of vector spaces. This is
one of the fundamental, and motivating, properties of quasi-hereditary
algebras, introduced by Cline, Parshall and Scott \cite{CPS1}. Another
source of examples for stratifications of algebras in this sense is a result
of Beilinson \cite{Beilinson},
which identifies certain derived categories of sheaves with
derived categories of algebras; this relates, for instance the coherent
sheaves on a projective line with the path algebra of the Kronecker quiver.
Recently, recollements of derived categories also have come up in tilting
theory \cite{AKL}, in the context of tilting modules associated with
injective homological epimorphisms.

\medskip

The question of Jordan H\"older theorems being valid for (certain)
derived categories or more generally uniqueness of stratifications to hold
true for (certain) triangulated categories has come up about twenty years
ago with the work of Cline, Parshall and Scott \cite{CPS1,PS}. It has
motivated studies of examples by Wiedemann \cite{W} and Happel
\cite{Hap1} as well as the criterion for existence of recollements in \cite{K}.
Despite this interest, the main Theorem is the first positive result obtained so far for any class
of algebras and of derived categories. A similar result cannot hold true for all algebras: In general,
 both existence and uniqueness of a finite Jordan H\"older series of a derived category may fail. 
A counterexample to existence will be provided in Section \ref{sectionex}. The difficult problem of uniqueness has been 
taken up by Chen and Xi \cite{CX}, who have constructed an algebra, whose derived category has two different Jordan H\"older series of
 different lengths. They also provided examples of finite Jordan H\"older series of the same derived category having the same length, 
but different composition factors.

\medskip

Like exact sequences in abelian categories, recollements of triangulated
categories have - in general or under additional assumptions - associated
long exact sequences for various cohomology theories, such as
K-theory, cyclic homology, and Hochschild cohomology.
Moreover, the middle term and
the outer terms of a recollement of derived categories of rings share some
homological invariants; for instance, the middle term has finite global or
finitistic dimension if and only if the outer terms have so as well, see
\cite{Hap2}.
\medskip

This article is organised as follows. In Section \ref{sectionprel}
we recall the definitions needed and then we collect for later use
a variety of results from various backgrounds. In Section
\ref{sectioncomplete} we prove technical results on completing
recollement diagrams, which in our setup play the role of the
butterfly lemma used in proofs of the classical Jordan H\"older
theorems. In Section \ref{sectioncompare} we discuss lifting and
restricting of recollements between bounded or unbounded derived
categories, and we give some criteria and examples of derived simple
rings. Section \ref{sectionproof} contains the
proof of the main theorem \ref{maintheorem}. Section \ref{sectionex} provides various (counter)examples;
this illustrates in particular our choice of definition for
'derived simplicity'. 

\bigskip

{\bf Acknowledgements:} The second and the third named authors
would like to thank the University of Verona and
the colleagues there for the hospitality in 2009, when much of the
research reported in this article has been done.

The first named author acknowledges partial support from MIUR,
PRIN-2008 ``Anelli, algebre, moduli e categorie", and from
Progetto di Ateneo CPDA071244 of the University of Padova.

\bigskip
\section{Recollements, universal extensions and perpendicular
categories} \label{sectionprel}

In this section we recall the definition of recollements and then collect
information on connections with other concepts, which will be used
in the proof of the Main Theorem \ref{maintheorem}.

\smallskip
Throughout this article, rings or algebras are assumed to be associative
with a unit element. An artin algebra by definition is an artinian algebra
over a commutative artinian ring.

\subsection{Recollements.}
Let $\Xcal, \Ycal$ and $\Dcal$ be triangulated categories. $\Dcal$
is said to be  a {\em recollement} (\cite{BBD}, see also
\cite{PS}) of $\Xcal$ and $\Ycal$ if there are six triangle
functors as in the following diagram
\[
\xy (-50,0)*{\mathcal Y}; {\ar (-27,3)*{}; (-43,3)*{}_{i^{\ast}}};
{\ar (-43,0)*{}; (-27,0)*{}|{ i_{\ast}=i_!}}; {\ar(-27,-3)*{};
(-43,-3)*{}^{i^!}}; (-20,0)*{\mathcal D}; {\ar (3,3)*{};
(-13,3)*{}_{j_!}}; {\ar (-13,0)*{}; (3,0)*{}|{j^!=j^\ast}};
{\ar(3,-3)*{}; (-13,-3)*{}^{j_{\ast}}}; (10,0)*{\mathcal X};
\endxy
\]
such that
\begin{enumerate}
\item $(i^\ast,i_\ast)$,\,$(i_!,i^!)$,\,$(j_!,j^!)$ ,\,$(j^\ast,j_\ast)$ are adjoint pairs;
\smallskip

\item $i_\ast,\,j_\ast,\,j_!$  are full embeddings;
\smallskip

\item  $i^!\circ j_\ast=0$ (and thus also $j^!\circ i_!=0$ and $i^\ast\circ j_!=0$);
\smallskip

\item  for each $C\in \Dcal$ there are triangles $$i_! i^!(C)\to
C\to j_\ast j^\ast (C)\leadsto $$
 $$j_! j^! (C)\to C\to i_\ast i^\ast(C)\leadsto$$
where the four morphisms staring from/ending at $C$ are the unit/counits of the adjoint pairs in (1).
\end{enumerate}

In this paper, $\Dcal$ will always be a derived module category
$\Dcal(R)=\Dcal(\text{Mod-}R)$ of some ring $R$ (with unit). By Mod-$R$ we mean the
category of right $R$-modules. Later on we will work with hereditary
artin algebras.

\medskip
\subsection{Homological epimorphisms and universal localization}

Let $\lambda\colon R\rightarrow S$ be a ring epimorphism, that is,
an epimorphism in the category of rings.
 Following  Geigle and Lenzing \cite{GL}, we say that
   $\lambda$ is  a  \emph{homological ring
epimorphism} if $\mbox{Tor} _i^R(S,S)=0$ for all $i>0$. Note that
this holds true if and only if the restriction functor
$\lambda_\ast:\Dcal(S)\to\Dcal(R)$ induced  by $\lambda$ is fully
faithful (\cite[4.4]{GL}, \cite[5.3.1]{N}). The following result
connects homological epimorphisms and
recollements, where their epiclasses and equivalence classes are
defined naturally. For more details see \cite{AKL}.

\medskip
\begin{prop} (\cite{AKL}, 1.7)
\label{chooseBwithhomepi} There is a bijection between the
epiclasses of homological ring epimorphisms and the equivalence
classes of those recollements  
\[
\xy (-39,0)*{\mathcal Y}; {\ar (-22,2)*{}; (-32,2)*{}_{i^{\ast}}};
{\ar (-32,0)*{}; (-22,0)*{}}; {\ar(-22,-2)*{};
(-32,-2)*{}}; (-15,0)*{\mathcal D}; {\ar (2,2)*{}; (-8,2)*{}};
{\ar (-8,0)*{}; (2,0)*{}}; {\ar(2,-2)*{}; (-8,-2)*{}};
(9,0)*{\mathcal X};
\endxy
\]

\vspace{0.3cm} for which $\Dcal=\Dcal(R)$ for some ring $R$ and
$i^\ast(R)$ is an exceptional object of $\Ycal$.
\end{prop}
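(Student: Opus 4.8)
The plan is to construct the bijection explicitly in both directions and then check that the two constructions are mutually inverse, once one passes to the appropriate equivalence classes. Given a homological ring epimorphism $\lambda\colon R\to S$, the restriction functor $\lambda_\ast\colon\Dcal(S)\to\Dcal(R)$ is fully faithful by the criterion recalled just before the statement (\cite[4.4]{GL}, \cite[5.3.1]{N}), and it admits a left adjoint $S\lten_R-$ and a right adjoint $\rhom_R(S,-)$. First I would verify that this functor is part of a recollement with $\Ycal=\Dcal(S)$ in the middle role of the subcategory, $\Dcal=\Dcal(R)$, and $\Xcal$ defined as the kernel of $S\lten_R-$ (equivalently, the right perpendicular category of $S$ inside $\Dcal(R)$). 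The six functors are the standard ones: $i_\ast=\lambda_\ast$, $i^\ast=S\lten_R-$, $i^!=\rhom_R(S,-)$ on the $\Ycal$-side; on the $\Xcal$-side $j^\ast$ is the inclusion $\Xcal\hookrightarrow\Dcal(R)$ and $j_!,j_\ast$ are its adjoints, which exist by Brown representability since $\Dcal(R)$ is compactly generated and $\Xcal$ is the localizing (and colocalizing) subcategory orthogonal to $S$. Property (3) is immediate from $\rhom_R(S,\lambda_\ast(-))=0$ for objects killed by $S\lten_R-$ after adjunction, and the triangles in (4) are the canonical localization/colocalization triangles attached to the smashing subcategory generated by $S$. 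Note that in this recollement $i^\ast(R)=S\lten_R R=S$, viewed in $\Dcal(S)$, is the regular module $S$, hence an exceptional object of $\Ycal=\Dcal(S)$; so the recollement lies in the class described in the statement.

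**The reverse construction.** Conversely, start from a recollement as displayed with $\Dcal=\Dcal(R)$ and $e:=i^\ast(R)$ an exceptional object of $\Ycal$. The idea is that an exceptional object with a full triangulated category as its endomorphism-ring-derived-category forces $\Ycal\simeq\Dcal(S)$ where $S=\End_{\Ycal}(e)$: indeed $e$ is a compact generator of $\Ycal$ (compactness and generation transfer along $i^\ast$ from $R$, using that $i^\ast$ has a fully faithful right adjoint $i_\ast$ and that $R$ generates $\Dcal(R)$), and being exceptional means $\End_{\Ycal}(e)$ is concentrated in degree $0$, so by Keller's theorem $\Ycal\simeq\Dcal(S)$ via an equivalence sending $e$ to $S$. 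Composing this equivalence with $i_\ast$ gives a fully faithful functor $\Dcal(S)\to\Dcal(R)$ preserving coproducts, which by the reconstruction theory for homological epimorphisms (again \cite[1.7]{AKL} and the surrounding discussion, or directly \cite{GL}, \cite{N}) must be restriction along a homological ring epimorphism $\lambda\colon R\to S$; concretely $\lambda$ is obtained by applying $i^\ast$ to the identity of $R$ and reading off the induced ring map $R=\End_{\Dcal(R)}(R)\to\End_{\Ycal}(i^\ast R)=S$. One checks this $\lambda$ is independent, up to the equivalence relation on epimorphisms, of the choices made (the choice of Keller equivalence, etc.).

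**Mutual inverseness and the main obstacle.** Finally I would check the two assignments are inverse to each other at the level of equivalence classes: starting from $\lambda$, building the recollement, and extracting a homological epimorphism returns $\lambda$ because $i^\ast(R)=S$ and the induced ring map on endomorphisms is exactly $\lambda$; starting from a recollement in the distinguished class, passing to $\lambda$ and rebuilding, the resulting recollement is equivalent to the original one because both are determined (up to equivalence) by the fully faithful coproduct-preserving functor $\Dcal(S)\to\Dcal(R)$, and $\Xcal$ is forced to be its perpendicular category by axiom (4). The main obstacle I expect is the reverse direction: showing that an arbitrary recollement with $i^\ast(R)$ exceptional really does come from a ring epimorphism, i.e. that the fully faithful functor $i_\ast\colon\Dcal(S)\to\Dcal(R)$ is genuinely restriction of scalars along a ring map rather than a more exotic embedding. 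This is where one needs the full force of the machinery identifying smashing subcategories generated by a "nice" exceptional/compact object with those arising from homological epimorphisms — precisely the content imported from \cite{AKL}, so in this paper the proof can legitimately just cite \cite[1.7]{AKL} for that step and spend its effort on verifying the exceptionality bookkeeping and the compatibility of the equivalence relations.
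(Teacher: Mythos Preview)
Your sketch is essentially correct and matches what the paper does, but note that the paper does \emph{not} actually prove this proposition: it simply cites \cite[1.7]{AKL} and then records the explicit description of the bijection in one direction, namely that from a recollement one extracts the homological epimorphism
\[
R=\End_{\Dcal}(R)\xrightarrow{\;\lambda\;}\End_{\Ycal}(i^\ast(R))=:S,
\]
with $\Ycal\simeq\Dcal(S)$, $i^\ast=-\lten_R S$, $i_\ast=\lambda_\ast$, and $i^!=\mathbf{R}\Hom_R(S,-)$. Your reverse construction is exactly this, and your forward construction and mutual-inverseness argument are the standard ones from \cite{AKL}; so there is no discrepancy in approach, only in level of detail.

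One small correction: in your description of the $\Xcal$-side you have the roles reversed. In the recollement conventions used here (see the list of axioms in Section~2.1), it is $j_!$ and $j_\ast$ that are the full embeddings $\Xcal\hookrightarrow\Dcal$, while $j^\ast=j^!$ is the quotient functor $\Dcal\to\Xcal$. This does not affect the substance of your argument, but it is worth getting straight since later arguments in the paper (e.g.\ Theorem~\ref{single}) rely on knowing precisely which of the six functors is an embedding.
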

Given such a recollement, the homological ring epimorphism is given by 
$$R=\End(R) \xrightarrow{\lambda} \End(i^*(R))=:S.$$
Up to equivalence, $\Ycal=\Dcal(S)$, $i^*=-\lten_R S$, $i_*=\lambda_*$ and $i^!=\mathbf{R}\Hom_R(S,-)$.


\medskip
{\bf Theorem.} {\cite[Theorem~4.1]{Schofieldbook}}
\label{def:universallocalization} {\it Let $R$ be a ring and $\Sigma$ be a set of
morphisms between finitely generated projective right $R$-modules.
Then there exist a ring $R_\Sigma$ and a morphism of rings
$\lambda\colon R\rightarrow R_\Sigma$ such that
\begin{enumerate}
\item $\lambda$ is \emph{$\Sigma$-inverting,} i.e. if
$\alpha\colon P\rightarrow Q$ belongs to  $\Sigma$, then
$\alpha\otimes_R 1_{R_\Sigma}\colon P\otimes_R R_\Sigma\rightarrow
Q\otimes_R R_\Sigma$ is an isomorphism of right
$R_\Sigma$-modules, and
\item $\lambda$ is \emph{universal
$\Sigma$-inverting}, i.e. if $S$ is a ring such that there exists
a $\Sigma$-inverting morphism $\psi\colon R\rightarrow S$, then
there exists a unique morphism of rings $\bar{\psi}\colon
R_\Sigma\rightarrow S$ such that $\bar{\psi}\lambda=\psi$.
\end{enumerate}
}

\medskip

The morphism $\lambda\colon R\rightarrow R_\Sigma$ is a ring
epimorphism with  $\Tor^R_1(R_\Sigma,R_\Sigma)=0.$ It
 is called the \emph{universal localization of $R$ at
$\Sigma$}.

\smallskip

In general, a universal localization  need not be a homological
ring epimorphism, see \cite{NRS} (and also \cite[Example 5.4]{AKL} for a
different kind of example). For a hereditary ring
 $R$, however, $\lambda\colon R\rightarrow R_\Sigma$ is always a homological epimorphism, and $R_\Sigma$ is a hereditary ring.
In fact, it is even shown in \cite[6.1]{KS} that over hereditary rings universal localizations  coincide with
 homological epimorphisms.

\medskip

Let now $\mathcal{U}$ be a set of  finitely presented right
$R$-modules of projective dimension one. For each
$U\in\mathcal{U},$ consider a morphism $\alpha_U$ between finitely
generated projective right $R$-modules such that
$$0\to P\stackrel{{\alpha_U}}{\to} Q\to U\to 0$$
 We will denote by
$R_{\mathcal{U}}$ the universal localization of $R$ at
$\Sigma=\{\alpha_U\mid U\in\mathcal{U}\}.$ In fact,   $R_{\mathcal
U}$ does not depend on the class $\Sigma$  chosen,
cf.~\cite[Theorem~0.6.2]{Cohnfreerings}, and we will also call it
the \emph{universal localization of $R$ at ${\mathcal{U}}$}.





\medskip

\subsection{Classical tilting modules}

Suppose $R$ is a ring. Recall that an $R$-module
 $T$ is said to be a {\em  tilting module (of
projective dimension at most one)}\label{tilt} if the following
conditions are satisfied:
\begin{enumerate}

\item  proj.dim$(T) \le 1$;

\item  $\Ext^1_R(T,T^{(I)}) = 0$ for each set $I$; and

\item  there is an  exact sequence $0 \to R \to T_0 \to T_1
\to 0$ where  $T_0,T_1$ belong to Add$T$. \end{enumerate} The module
$T$ is called a {\em partial tilting module} if it satisfies the
conditions (1) and (2).
If, in addition, $T$ is finitely presented, then we say that $T$ is a  {\em classical}  (partial) tilting module.

\smallskip

It was shown in \cite{AKL} that classical partial tilting modules
induce recollements. In Theorem \ref{HRSextended} below we  state this result  for  the case of a hereditary ring, where there is an important
connection to universal localizations.

\medskip

\subsection{Exceptional objects.} \label{exceptional}
Let us  turn to the derived  category $\mathcal D=\mathcal D(R)$. Recall that
$X\in \mathcal D$ is {\it exceptional} if $\Hom_{\mathcal D}(X,X[n])=0$ for all non-zero integers $n$.
Further, the analog of finitely presented modules is provided by  the \emph{compact} objects, that is, the objects $X\in \mathcal D$ such that  the functor
$\Hom_{\mathcal D}(X,-)$ preserves small coproducts, or equivalently,  $X$ is quasi-isomorphic to a bounded complex consisting of finitely generated projective modules.
Of course, a finitely presented $R$-module over a hereditary ring $R$  is exceptional if and only if it is a classical partial tilting module.

\medskip

Let now $X\in \mathcal D$ be  a compact exceptional object and denote by $\Tria X$  the smallest full triangulated subcategory of $\mathcal D$ which contains $X$ and
is closed under small coproducts. If $\Tria X= \mathcal D$, then $X$ is said to be a {\it tilting complex}. In general, we know from
 \cite{Ke} that $\Tria X$ is equivalent to the derived category $\Dcal(C)$ of
  the endomorphism ring $C=\End_{\mathcal D}(X)$ of $X$.

\smallskip

The following result characterizes the existence of a recollement of $\mathcal D$ by derived categories of rings in terms of  a suitable pair of exceptional objects.

\begin{thm}(\cite{K}, \cite[5.2.9]{N}, \cite{NS1})\label{single} There are rings $R,B,C$ with  a  recollement of the form
\[
\xy (-45,0)*{\Dcal(B)};
{\ar (-25,2)*{}; (-35,2)*{}}; {\ar (-35,0)*{}; (-25,0)*{}};
{\ar(-25,-2)*{}; (-35,-2)*{}}; (-15,0)*{\Dcal(R)};
{\ar (5,2)*{}; (-5,2)*{}_{j_!}}; {\ar (-5,0)*{}; (5,0)*{}}; {\ar
(5,-2)*{}; (-5,-2)*{}}; (15,0)*{\Dcal(C)};
\endxy
\]

\vspace{0.3cm}
if and only if there are exceptional objects $X,Y\in \Dcal(R)$ such that
\begin{enumerate}
\item[(i)]  $X$ is  compact,
\item[(ii)] $Y$ is a self-compact object, that is, $\Hom_{\mathcal D}(Y,-)$ preserves small coproducts in $\Tria Y$,
\item[(iii)] $\Hom_{\Dcal}(X[n],Y)=0$ for all $n\in\Z$,
\item[(iv)] $X\oplus Y$ generates $\Dcal(R)$, that is, an object $C\in \Dcal(R)$ is   zero whenever $\Hom_{\Dcal} (X\oplus Y,C[n])=  0$ for every integer $n$.
\end{enumerate}
In particular,
  $X=j_!(C)$, and  $\Tria X$ is equivalent to $\Dcal(C)$.\end{thm}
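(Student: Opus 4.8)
\textbf{Proof plan for Theorem \ref{single}.}

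The plan is to read the statement as an equivalence and prove each direction using the building blocks assembled above, in particular Proposition \ref{chooseBwithhomepi} and the result of Keller on $\Tria X$.

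For the direction ``recollement $\Rightarrow$ exceptional objects'', I would start from the given recollement with the six functors $i_\ast, i^\ast, i^!, j_!, j^\ast, j_\ast$ between $\Dcal(B)$, $\Dcal(R)$ and $\Dcal(C)$. Set $X:=j_!(C)$ and $Y:=i_\ast(B)$. Since $j_!$ is a full embedding that admits a right adjoint $j^!$, and $C$ is a compact generator of $\Dcal(C)$, the object $X$ is compact in $\Dcal(R)$: indeed $\Hom_{\Dcal(R)}(j_!C,-)\cong\Hom_{\Dcal(C)}(C,j^!(-))$ and $j^!$ preserves coproducts because it has a right adjoint $j_\ast$. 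Exceptionality of $X$ follows from fullness of $j_!$ together with $C$ being exceptional in $\Dcal(C)$; likewise $Y=i_\ast(B)$ is exceptional since $i_\ast$ is fully faithful. The vanishing (iii) is exactly the recollement axiom $j^!\circ i_!=0$ (equivalently $i^\ast\circ j_!=0$) translated through the adjunctions: $\Hom_{\Dcal(R)}(X[n],Y)=\Hom_{\Dcal(R)}(j_!C[n],i_\ast B)\cong\Hom_{\Dcal(B)}(i^\ast j_! C[n],B)=0$. For self-compactness of $Y$ one observes $\Tria Y=\Img i_\ast$ is equivalent, via $i_\ast$, to $\Dcal(B)$, under which $Y$ corresponds to the compact generator $B$; hence $\Hom_{\Dcal(R)}(Y,-)$ preserves coproducts formed inside $\Tria Y$. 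Finally the generation property (iv) is forced by the recollement triangles $j_!j^!(M)\to M\to i_\ast i^\ast(M)\leadsto$: if $\Hom_{\Dcal(R)}(X\oplus Y,M[n])=0$ for all $n$, then $j^!(M)=0$ and $i^\ast(M)=0$, so the triangle gives $M=0$.

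For the converse, given $X,Y$ satisfying (i)--(iv), I would first invoke Keller's theorem to identify $\Tria X\simeq\Dcal(C)$ with $C=\End_{\Dcal(R)}(X)$, and likewise $\Tria Y\simeq\Dcal(B)$ with $B=\End_{\Dcal(R)}(Y)$ (using self-compactness of $Y$ in place of compactness). The inclusion $\Tria X\hookrightarrow\Dcal(R)$ together with compactness of $X$ gives, via Brown representability / the adjoint functor theorem, a right adjoint and then a further right adjoint, producing the localization half $j_!\dashv j^\ast\dashv j_\ast$ of the recollement. It remains to build the colocalization half and to check that the subcategory $\Ker j^\ast = {}^\perp(\Tria X)$ — i.e. the objects $M$ with $\Hom_{\Dcal(R)}(X[n],M)=0$ for all $n$ — is generated by $Y$ and is equivalent to $\Dcal(B)$; here conditions (ii) and (iii) are used to see that $Y$ lies in $\Ker j^\ast$ and is a self-compact generator of it, and condition (iv) ensures that $\Ker j^\ast$ together with $\Tria X$ fills out all of $\Dcal(R)$, which is what glues the two halves into a genuine recollement. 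The concluding assertion $X=j_!(C)$ and $\Tria X\simeq\Dcal(C)$ then records the identifications already made.

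The main obstacle is the converse direction, specifically the passage from the two ``abstract'' triangulated subcategories $\Tria X$ and $\Tria Y$ to the \emph{six} recollement functors satisfying all of (1)--(4) in the definition. The existence of the various adjoints rests on compactness/self-compactness plus Brown representability, which is standard but delicate; the real work is verifying the orthogonality and the two families of recollement triangles, i.e. that $({}^\perp\Tria X,\Tria X)$ (or the appropriate twist) is a stable $t$-structure-like torsion pair whose aisle is precisely $\Tria Y$. Since this theorem is quoted from \cite{K}, \cite[5.2.9]{N} and \cite{NS1}, in the paper I would cite those sources for the full construction and only sketch the translation above, emphasising how conditions (i)--(iv) correspond one-to-one to the features of a recollement.
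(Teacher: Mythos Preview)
The paper does not give a proof of this theorem at all: it is stated as a quoted result, with references to \cite{K}, \cite[5.2.9]{N} and \cite{NS1}, and is used as a black box throughout the rest of the article. Your closing remark already anticipates this, and your sketch is a faithful outline of the standard argument found in those sources.

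One small slip in your forward direction: from $\Hom_{\Dcal(R)}(Y,M[n])=\Hom_{\Dcal(R)}(i_\ast B,M[n])\cong\Hom_{\Dcal(B)}(B,i^{!}M[n])$ you obtain $i^{!}M=0$, not $i^{\ast}M=0$ as you wrote. Accordingly you should invoke the \emph{other} canonical triangle $i_{!}i^{!}(M)\to M\to j_{\ast}j^{\ast}(M)\leadsto$; since $j^{\ast}=j^{!}$ and both $i^{!}M$ and $j^{!}M$ vanish, $M=0$ follows. Alternatively, keep your triangle and note that $j^{!}M=0$ forces $M\cong i_{\ast}i^{\ast}M$, whence $0=\Hom(i_{\ast}B,M[n])\cong\Hom(B,i^{\ast}M[n])$ by full faithfulness of $i_{\ast}$, giving $i^{\ast}M=0$. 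Either way the argument goes through; just make the adjunction used consistent with the triangle you quote.
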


\medskip

\subsection{Exceptional sequences.} \label{exceptseq}
In this  subsection, let $A$ be a hereditary artin algebra
with $n$ non-isomorphic simple modules.
Recall that a sequence of
exceptional $A$-modules $(X_1, X_2, \ldots, X_m)$ is called an {\it
exceptional sequence} if $\Hom_A(X_j,X_i)=0$ and $\Ext^1_A(X_j,X_i)=0$ for
each pair $i<j$.
 An exceptional sequence $(X_1, X_2, \ldots, X_m)$
is called {\it complete} if $m=n$.

\medskip

By Schur's lemma the endomorphism ring of a simple module is a
skew-field. This is also the case for any indecomposable
exceptional module  of finite length by a result of Happel and Ringel.

\begin{prop} {\cite[4.1 and 4.2]{HR}} \label{Endofexceptional}
(1) If $X_A$ is an indecomposable, finitely generated, and
exceptional module, then the endomorphism ring of $X$ is a skew-field.


(2) If $X_A$ is a finitely generated,
multiplicity-free, and exceptional $A$-module, then its  indecomposable direct summands  can be arranged into an
exceptional sequence, which will be complete whenever $X$ is a classical tilting $A$-module.
\end{prop}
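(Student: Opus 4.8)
The plan is to prove the two assertions of Proposition~\ref{Endofexceptional} separately, reducing~(2) to~(1) by an inductive argument on the number of indecomposable summands together with the defining conditions of an exceptional sequence.

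For part~(1), let $X$ be an indecomposable, finitely generated, exceptional $A$-module, so by definition $\Ext^1_A(X,X)=0$ (higher Ext groups vanish automatically since $A$ is hereditary, so the full condition $\Hom_{\Dcal(A)}(X,X[n])=0$ for $n\neq 0$ is equivalent to $\Ext^1_A(X,X)=0$). I would argue that $\End_A(X)$ is a skew-field as follows. Since $X$ is indecomposable and finitely generated over an artin algebra, $\End_A(X)$ is a local ring with Jacobson radical $J=\mathrm{rad}\,\End_A(X)$ consisting exactly of the non-invertible endomorphisms. It suffices to show $J=0$. The standard approach, following Happel--Ringel, is to take a nonzero $f\in J$, consider the short exact sequences $0\to \Ker f\to X\to \Img f\to 0$ and $0\to \Img f\to X\to \Coker f\to 0$, apply $\Hom_A(-,X)$ and $\Hom_A(X,-)$, and use $\Ext^1_A(X,X)=0$ together with the hereditary hypothesis (which forces $\Ext^1_A(\Img f, X)$ and $\Ext^1_A(X, \Img f)$ to behave well as quotients of $\Ext^1_A(X,X)=0$, up to the relevant terms) to derive that any such $f$ must be either injective or surjective, hence an isomorphism by finite length, contradicting $f\in J$. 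I expect this dévissage---carefully chasing which Ext terms vanish and lifting endomorphisms through the short exact sequences---to be the main technical obstacle; it is precisely the content of \cite[4.1]{HR} and the hereditary hypothesis is what makes the homological bookkeeping close up.

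For part~(2), write $X=X_1'\oplus\cdots\oplus X_m'$ as a direct sum of pairwise non-isomorphic indecomposables (using that $X$ is multiplicity-free). Each $X_i'$ is indecomposable and exceptional, so $\End_A(X_i')$ is a skew-field by part~(1), and $\Ext^1_A(X_i',X_i')=0$. The task is to order the $X_i'$ so that $\Hom_A(X_j',X_i')=0=\Ext^1_A(X_j',X_i')$ for $i<j$. I would do this by induction on $m$: it suffices to find one summand, say $X_m'$, which is ``maximal'' in the sense that $\Hom_A(X_m',X_i')=0=\Ext^1_A(X_m',X_i')$ for all $i\neq m$, and then apply the inductive hypothesis to $X/X_m'$. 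Existence of such a maximal summand follows from a finiteness/partial-order argument on the finitely many indecomposable summands: one orders them compatibly with the relation ``$U\preceq V$ if $\Hom_A(V,U)\neq 0$ or $\Ext^1_A(V,U)\neq 0$'', and shows this relation contains no cycles among the summands of an exceptional module (a cycle would contradict the vanishing of self-extensions, since a cyclic chain of nonzero maps or extensions among distinct indecomposable summands of $X$ would, after composing, produce a nonzero nilpotent self-map or a nonzero self-extension of $X$, i.e.\ a nonzero element of $\Ext^1_A(X,X)$); hence a topological sort exists and yields the desired exceptional sequence. Finally, when $X$ is a classical tilting module it has exactly $n$ pairwise non-isomorphic indecomposable summands (the number of simples, by the standard counting theorem for tilting modules over artin algebras), so $m=n$ and the exceptional sequence is complete. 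The main subtlety here is justifying acyclicity of the relation; I would phrase it precisely as: if there were a shortest cycle $X_{i_1}'\to X_{i_2}'\to\cdots\to X_{i_k}'\to X_{i_1}'$ of nonzero homomorphisms (the $\Ext$-edges can be converted to $\Hom$-edges via the Auslander--Reiten translate $\tau$, using $\Ext^1_A(V,U)\cong D\overline{\Hom}_A(U,\tau V)$, or handled directly), then composing gives a nonzero endomorphism of $X_{i_1}'$ that is not an isomorphism (since it factors through other indecomposables), contradicting part~(1); mixed cycles involving both $\Hom$- and $\Ext^1$-edges are ruled out similarly after translating everything into the derived category, where an edge $X_{i_t}'\to X_{i_{t+1}}'[\epsilon_t]$ with $\epsilon_t\in\{0,1\}$ composes to a nonzero map $X_{i_1}'\to X_{i_1}'[\sum\epsilon_t]$, forcing $\sum\epsilon_t=0$ by exceptionality, whence all $\epsilon_t=0$ and we are back in the pure-$\Hom$ case.
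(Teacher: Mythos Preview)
The paper does not give its own proof of this proposition; it is simply cited from \cite[4.1 and 4.2]{HR}. So there is no argument in the paper to compare yours against, and your sketch of~(1) is essentially the Happel--Ringel proof.

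Your argument for~(2), however, has a genuine gap. First, a simplification you missed: since $X$ itself is exceptional and $X=\bigoplus X_i'$, we have $\Ext^1_A(X_i',X_j')=0$ for \emph{all} pairs $i,j$, so your relation $\preceq$ has no ``$\Ext$-edges'' at all and the mixed-cycle discussion is vacuous. The relevant relation is purely $X_i'\preceq X_j'$ iff $\Hom_A(X_j',X_i')\neq 0$. Second, and more seriously, your acyclicity argument asserts that the composite around a shortest cycle of nonzero maps is itself nonzero. This is not justified: composites of nonzero morphisms can vanish, and ``shortest cycle'' does not rescue the claim (if $g\circ f=0$ you do not get a shorter cycle, you just get nothing). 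The Happel--Ringel approach avoids this by first proving (their Lemma~4.1) that whenever $\Ext^1_A(Y,X)=0$ for indecomposables $X,Y$, any nonzero map $X\to Y$ is a monomorphism or an epimorphism; one then compares composition-series lengths along a putative cycle to obtain an isomorphism between distinct summands, a contradiction. You should replace the unsupported ``nonzero composite'' step by this mono-or-epi dichotomy.
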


\begin{thm} {\cite[Theorem~4]{R2}}\label{endotilting}
Let $A$ be a hereditary artin
algebra, and $T$ a multiplicity-free classical tilting $A$-module.
Then the endomorphism rings of the indecomposable summands of $T$
are precisely the endomorphism rings of the non-isomorphic simple
modules.
\end{thm}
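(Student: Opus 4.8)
The plan is to reduce everything to how endomorphism rings behave under mutation of exceptional sequences. First, by Proposition~\ref{Endofexceptional}(2) I would arrange the $n$ pairwise non-isomorphic indecomposable summands $X_1,\dots,X_n$ of $T$ into a complete exceptional sequence $(X_1,\dots,X_n)$ in $\mathrm{mod}\,A$; by part~(1) each $\End_A(X_i)$ is a skew-field. Since the valued quiver of the hereditary artin algebra $A$ has no oriented cycles, the simple modules can likewise be arranged, say as $S_{\pi(1)},\dots,S_{\pi(n)}$ along a topological order of that quiver, so that $\Hom_A(S_{\pi(j)},S_{\pi(i)})=0$ for $i\neq j$ and $\Ext^1_A(S_{\pi(j)},S_{\pi(i)})=0$ for $i<j$, while $\Ext^m_A(S_i,S_i)=0$ for all $m\neq 0$ (there being no loops and $A$ being hereditary); so $(S_{\pi(1)},\dots,S_{\pi(n)})$ is again a complete exceptional sequence, whose endomorphism rings are precisely the skew-fields $\End_A(S_1),\dots,\End_A(S_n)$. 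It therefore suffices to prove that the multiset of isomorphism classes $\{\End_A(E_1),\dots,\End_A(E_n)\}$ attached to a complete exceptional sequence $(E_1,\dots,E_n)$ of $\mathrm{mod}\,A$ does not depend on the chosen sequence.

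To obtain this I would invoke the transitivity of the braid group action on the set of complete exceptional sequences of $\mathrm{mod}\,A$ --- proved over an algebraically closed field by Crawley-Boevey and in the generality of hereditary artin algebras by Ringel, see \cite{R2} --- which says that any two such sequences are linked by a finite chain of mutations of adjacent pairs. Since a left (or right) mutation of a pair leaves one of its two objects untouched, it is then enough to establish the following: \emph{if $(X,Y)$ is an exceptional pair with left mutation $L_XY$, then $\End(L_XY)\cong\End(Y)$} (and dually $\End(R_YX)\cong\End(X)$ for the right mutation).

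To prove this I would compute in $\Dcal(A)$, where $\mathrm{mod}\,A$ sits as a full subcategory and the necessary adjunctions are available. Set $D=\End(X)$, a skew-field. As $A$ is hereditary, $\mathbf R\Hom_A(X,Y)\simeq\Hom_A(X,Y)\oplus\Ext^1_A(X,Y)[-1]$ is a perfect complex of (necessarily free) right $D$-modules, and $L_XY$ is given by the triangle
\[
L_XY\longrightarrow \mathbf R\Hom_A(X,Y)\lten_D X \overset{\mathrm{ev}}{\longrightarrow} Y \longrightarrow L_XY[1],
\]
$\mathrm{ev}$ being the evaluation morphism. Applying $\mathbf R\Hom_A(X,-)$: since $X$ is compact and exceptional the middle term is identified with $\mathbf R\Hom_A(X,Y)$ through the adjunction unit, and under this identification $\mathbf R\Hom_A(X,\mathrm{ev})$ becomes the identity, so $\mathbf R\Hom_A(X,L_XY)=0$ --- which is exactly why $(L_XY,X)$ is again exceptional. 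Since $(X,Y)$ is an exceptional pair, $\mathbf R\Hom_A(Y,X)=0$, hence $\mathbf R\Hom_A(Y,\mathbf R\Hom_A(X,Y)\lten_D X)\simeq\mathbf R\Hom_A(X,Y)\lten_D\mathbf R\Hom_A(Y,X)=0$, and applying $\mathbf R\Hom_A(Y,-)$ to the triangle gives $\mathbf R\Hom_A(Y,L_XY)\simeq\mathbf R\Hom_A(Y,Y)[-1]=\End(Y)[-1]$. Finally, applying $\mathbf R\Hom_A(-,L_XY)$ to the triangle and using the adjunction $\mathbf R\Hom_A(\mathbf R\Hom_A(X,Y)\lten_D X,\,L_XY)\simeq\mathbf R\Hom_D(\mathbf R\Hom_A(X,Y),\,\mathbf R\Hom_A(X,L_XY))=0$, I get $\mathbf R\Hom_A(L_XY,L_XY)\simeq\mathbf R\Hom_A(Y,L_XY)[1]\simeq\End(Y)$, concentrated in degree $0$. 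Thus $L_XY$ is exceptional and $\End(L_XY)\cong\End(Y)$; the right-mutation statement is dual, and the theorem follows.

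The hard part is the computation of the last paragraph: one must track carefully the bimodule actions of the (in general noncommutative) skew-field $D$, so that the identifications $\mathbf R\Hom_A(X,M\lten_D X)\simeq M$, $\mathbf R\Hom_A(Y,M\lten_D X)\simeq M\lten_D\mathbf R\Hom_A(Y,X)$ and the Hom--tensor adjunction $\mathbf R\Hom_A(M\lten_D X,-)\simeq\mathbf R\Hom_D(M,\mathbf R\Hom_A(X,-))$ are genuinely valid for the perfect complex $M=\mathbf R\Hom_A(X,Y)$; and, upstream of that, the argument rests on having the transitivity of the braid group action available for hereditary artin algebras, not merely over an algebraically closed base field, which is the input taken from \cite{R2}.
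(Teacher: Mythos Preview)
The paper does not prove this statement; it is quoted as \cite[Theorem~4]{R2} and used as an input to the proof of Theorem~\ref{maintheorem}. Your argument is correct and is essentially Ringel's own proof in \cite{R2}: one first establishes transitivity of the braid group action on complete exceptional sequences over an arbitrary hereditary artin algebra (the main result of \cite{R2}, independent of the statement at hand), and then checks that an elementary mutation preserves the endomorphism ring of the mutated term. Your derived-category computation of $\End(L_XY)\cong\End(Y)$ is clean and correct, including as a ring isomorphism: the identification is induced by pre- and post-composition with the connecting map $\delta\colon Y[-1]\to L_XY$, and for $f_i\in\End(L_XY)$ corresponding to $g_i\in\End(Y)$ via $f_i\circ\delta=\delta\circ g_i[-1]$ one has $(f_1f_2)\circ\delta=\delta\circ(g_1g_2)[-1]$.

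It may be worth noting, for contrast, that the paper later obtains the generalisation to tilting \emph{complexes} (Corollary~\ref{endoftiltingcomplex}) by an entirely different route, via the Jordan--H\"older Theorem~\ref{maintheorem}; but since the proof of Theorem~\ref{maintheorem} uses Theorem~\ref{endotilting}, that route does not give an independent proof of the module version, and your direct mutation argument is the genuine content.
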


\medskip

The concept of  (complete) exceptional sequence is available in
the derived  category $\Dcal(A)$  as well.
Since $A$ is hereditary,  the indecomposable objects in $\Dcal(A)$ are the shifts
of the indecomposable $A$-modules. Hence Proposition  \ref{Endofexceptional} holds in  $\Dcal(A)$, too.
Given a  compact, multiplicity-free, and exceptional object $X$ in $\Dcal(A)$, we can decompose $X$ into
a direct sum $Y_1[k_1]\oplus Y_2[k_2]\oplus \ldots \oplus Y_s[k_s]$
such that the $Y_i$'s are $A$-modules and $k_1 < k_2 <\ldots < k_s$.
Since modules have no extensions in negative degrees,
there are no nontrivial homomorphisms from $Y_i[k_i]$ to
$Y_j[k_j]$ whenever $i>j$. Hence we can order the indecomposable
direct summands of $X$ into an exceptional sequence, which will be complete whenever  $X$
generates $\Dcal(A)$ and therefore has $n$ indecomposable
direct summands.

Here the question arises, whether Theorem \ref{endotilting}
holds for tilting complexes. This question will be answered positively
later, in Corollary \ref{endoftiltingcomplex}.

\medskip
\subsection{Perpendicular categories}\label{perpendicular}

Recollements are closely related to torsion theories and the outer terms
in a recollement are equivalent to certain perpendicular categories, see
\cite{K,N}. Perpendicular categories behave especially well in hereditary
situations.

For any ring $R$ and any $R$-module $X$, the {\it perpendicular category}
$\widehat{X}$ is by definition the full subcategory of Mod-$R$
consisting of the modules $M$ satisfying $\Hom_R(X,M)=0$ and
$\Ext^1_R(X,M)=0$.

\medskip

The next result extends a result by Happel, Rickard and Schofield
from finite dimensional hereditary algebras to semihereditary
rings, and from module category level to derived category level.
Recall that a ring is hereditary if all submodules of projective
modules are again projective. If this is required only for
finitely generated submodules, it is called {\em semihereditary}.
For example,  {\em
von Neumann regular} rings, that is, rings $R$ such that every element $a$ can be written as
 $a=axa$ for some $x$ in $R$ (depending on $a$), are semihereditary.
 Commutative
semihereditary domains are called {\em Pr\"ufer domains}. The subring of $\mathbb C$ consisting of the algebraic integers is a non-noetherian Pr\"ufer domain of global dimension 2, cf.~\cite[VI,4.5]{FS}. Another example of a ring that is semihereditary but not hereditary (on one side) can be found in \cite[2.33]{Lam}.

\medskip
\begin{thm} \label{HRSextended}
Let $R$ be a semihereditary ring, and $X_R$ a finitely presented,
exceptional  $R$-module. Then there exists a ring $B$ such that
the following holds:
\begin{enumerate}
\item (\cite[Proposition 3]{HRS}) The perpendicular category
$\widehat{X}$ is equivalent to $\mathrm{Mod}\text{-}B$.
\item There is a recollement \[
\xy (-45,0)*{\mathcal \Dcal(B)}; {\ar (-25,2)*{}; (-35,2)*{}};
{\ar (-35,0)*{}; (-25,0)*{}^{}}; {\ar(-25,-2)*{}; (-35,-2)*{}};
(-15,0)*{{\mathcal D}(R)}; {\ar (5,2)*{}; (-5,2)*{}_{}};
{\ar (-5,0)*{}; (5,0)*{}}; {\ar (5,-2)*{}; (-5,-2)*{}};
(15,0)*{{\mathcal \Dcal(C)}};
\endxy
\]
where $C=\End_R(X)$ is the endomorphism ring of $X$.
\item The ring $B$ can be chosen as universal localization of $R$ at $X$. Further, $B$ is hereditary if so is $R$.
\end{enumerate}
\end{thm}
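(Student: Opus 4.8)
The plan is to build everything on the module-category statement (1), which is already available from \cite[Proposition~3]{HRS}, together with the machinery on homological epimorphisms and universal localization recalled above. First I would treat part (3). Since $X$ is a finitely presented exceptional $R$-module and $R$ is semihereditary, $X$ has projective dimension at most one, so one may fix a projective resolution $0\to P\xrightarrow{\alpha} Q\to X\to 0$ with $P,Q$ finitely generated projective, and form the universal localization $\lambda\colon R\to R_\Sigma=:B$ at $\Sigma=\{\alpha\}$; by the remark before the statement this $B$ does not depend on the chosen resolution, and over a hereditary ring $\lambda$ is a homological ring epimorphism with $B$ again hereditary (by \cite[6.1]{KS}), which gives the last sentence of (3). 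The key point to identify $B$ with the ring of part~(1) is that $\mathrm{Mod}\text{-}B$, viewed inside $\mathrm{Mod}\text{-}R$ via the restriction functor $\lambda_*$, is exactly the full subcategory of modules $M$ with $M\otimes_R\alpha$ invertible, equivalently $\mathrm{Hom}_R(X,M)=0=\mathrm{Ext}^1_R(X,M)$ — that is, the perpendicular category $\widehat X$. This identification is standard for universal localizations at maps of projectives (the essential image of $\lambda_*$ consists of the $\Sigma$-local modules), so $\widehat X\simeq\mathrm{Mod}\text{-}B$ and the $B$ of (1) and (3) agree.

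Next I would establish (2). Because $\lambda\colon R\to B$ is a homological ring epimorphism, the restriction functor $\lambda_*\colon\Dcal(B)\to\Dcal(R)$ is fully faithful (as recorded after the definition of homological epimorphism), and its essential image is $\Tria$ of the compact object $j_!(C)$ complementary to $X$; more directly, one invokes Theorem~\ref{single}: it suffices to exhibit compact exceptional objects whose triangulated hull and perpendicular data match. Here $X$ itself is compact (finitely presented of projective dimension $\le 1$, hence quasi-isomorphic to the two-term complex $P\to Q$) and exceptional, so $\Tria X\simeq\Dcal(C)$ with $C=\End_R(X)=\End_{\Dcal(R)}(X)$ by \cite{Ke}. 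For the complementary object one takes the image of $B$ under $\lambda_*$: the object $T^\bullet:=\lambda_*(B)$ is self-compact with $\Tria T^\bullet\simeq\Dcal(B)$, and the homological epimorphism condition forces $\mathrm{Hom}_{\Dcal(R)}(X[n],T^\bullet)=0$ for all $n$ (this is where the vanishing $\mathrm{Ext}^i_R(X,M)=0$ for all $i$ and all $B$-modules $M$ enters, which over a hereditary ring reduces to $i=0,1$, i.e.\ to $M\in\widehat X$). Finally $X$ and $T^\bullet$ together generate $\Dcal(R)$ because an object killed by both is, on the one hand, $\Sigma$-local and hence (being killed by $\Tria T^\bullet$, i.e.\ by $B$ hence by every $B$-module) zero in $\Dcal(B)$, and on the other hand perpendicular to $X$; a short exact-triangle argument using the two recollement triangles of Theorem~\ref{single}, or equivalently the standard ``glued'' $t$-structure computation, then forces it to vanish. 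Feeding this pair $(X,T^\bullet)$ into Theorem~\ref{single} produces exactly the recollement of (2) with outer terms $\Dcal(B)$ and $\Dcal(C)$.

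The main obstacle I anticipate is \emph{not} the abstract recollement bookkeeping but the passage from the hereditary case to the merely semihereditary case in the two places where hereditarity was genuinely used by Happel--Rickard--Schofield: (a) that $\widehat X$ is again a module category, and (b) that restriction along $\lambda$ is \emph{homologically} faithful, i.e.\ that $\mathrm{Ext}^i_R$ between objects of $\widehat X$ can be computed after restriction. Point (a) is handed to us by \cite[Proposition~3]{HRS}, whose proof already works over semihereditary rings, so the real work is (b): over a semihereditary ring the universal localization at a finitely presented module of projective dimension one need not \emph{a priori} be a homological epimorphism, so one cannot quote \cite[6.1]{KS} verbatim. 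I would handle this by checking $\mathrm{Tor}^R_i(B,B)=0$ for $i>0$ directly from the local structure of $\Sigma$-inverting maps at a single morphism $\alpha$ between finitely generated projectives — Schofield's construction gives an explicit resolution — or, alternatively, by a localization argument reducing to the hereditary (indeed semihereditary-is-locally-a-valuation-ring) situation. Once $\lambda$ is known to be a homological epimorphism, all of the above goes through with ``hereditary'' replaced by ``semihereditary'' except the final clause of (3), which is only claimed under the hypothesis that $R$ itself is hereditary and there follows from \cite[6.1]{KS} as stated.
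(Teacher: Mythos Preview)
Your architecture matches the paper's: identify $\widehat{X}$ with $\mathrm{Mod}\text{-}B$ for $B$ the universal localization of $R$ at $X$, show that $\lambda\colon R\to B$ is a homological ring epimorphism, and deduce the recollement. However, there are two places where your argument diverges from the paper, and one of them is a genuine gap.

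First, the ``main obstacle'' you single out is in fact not an obstacle at all, and the paper resolves it in one line. A semihereditary ring has weak global dimension at most~$1$ (this is \cite[4.67]{Lam}), so $\Tor_i^R(B,B)=0$ for all $i\ge 2$ automatically; and $\Tor_1^R(R_\Sigma,R_\Sigma)=0$ is a general property of universal localization, recorded immediately after Schofield's theorem in Section~\ref{sectionprel}. Together these give that $\lambda$ is a homological epimorphism over any semihereditary ring, with no need for a direct resolution of $B$ or a reduction to the hereditary case. Your proposed workarounds (explicit Tor computation, or localizing to valuation rings) are therefore unnecessary, and the second one would not obviously work anyway since semihereditary rings are not in general locally valuation rings.

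Second, your route to part~(2) via Theorem~\ref{single} is more circuitous than the paper's, and your generation argument is circular as written: you invoke ``the two recollement triangles of Theorem~\ref{single}'' to verify condition~(iv), but those triangles are the \emph{output} of Theorem~\ref{single}, not something available before its hypotheses are checked. The paper avoids this by appealing to results that produce the recollement directly from the homological epimorphism: once $\lambda$ is homological, \cite[Example~4.5]{AKL} (in the projective case) and \cite{NR} (in the projective-dimension-one case) yield the recollement of $\Dcal(R)$ by $\Dcal(B)$ and $\Tria X\simeq\Dcal(C)$ without having to verify generation by hand. This is essentially the content of Proposition~\ref{chooseBwithhomepi}. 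If you want to repair your version, the honest argument is: the homological epimorphism already gives a recollement with $\Dcal(B)$ on the left (Proposition~\ref{chooseBwithhomepi}), and one then identifies the right-hand side with $\Tria X$ --- not the other way around.
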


\medskip
\begin{proof}
Since $R$ is semihereditary, every finitely presented $R$-module
has projective dimension $\le 1$. The statements are thus
contained in \cite[Example 4.5 and Theorem 4.8]{AKL}, we give some details for the
reader's convenience. By \cite[Lemma 4.1]{AKL} the perpendicular category $\widehat{X}$  coincides with  the essential image of the
restriction functor $\lambda_*$ given by the universal
localization $\lambda:R\to B$ at $X$. Moreover, $\widehat{X}$ is a reflective subcategory of $\M$. This means by definition
that every module $M\in\M$ admits a $\widehat{X}$-reflection, that is, a morphism $\eta_M:M\ra N$ such that $N\in\widehat{X}$ and 
$\Hom_R(\eta_M,Y):\Hom_R(N,Y)\ra\Hom_R(M,Y)$ is bijective for all $Y\in\widehat{X}$. 

\smallskip

{\it First case:} $X$ is projective. 
 $\lambda:R\to B$ can be chosen as universal
localization  at the zero map
$\Sigma=\{\sigma:0\to X\}$. Hence it is a homological epimorphism, because
$R$ has
weak global dimension bounded by 1, see \cite[4.67]{Lam}. 
Then  $\Dcal(R)$ is a recollement of $\Tria X\cong \Dcal(C)$ and
$\Dcal(B)$, see \cite[Example 4.5]{AKL}.

Note also that the $R$-module $B_R$
is isomorphic to  the  $\widehat{X}$-reflection of $R$, and by \cite[Section 1]{CTT} the latter coincides with $R/\tau_X(R)$ where $\tau_X$ denotes the trace of $X$.

\smallskip

{\it Second case:} $X$ has projective dimension one. The
 universal
localization $\lambda:R\to B$ at $X$ is a homological
epimorphism by \cite[4.67]{Lam}. 
Then $\Dcal(R)$ is a recollement of $\Tria X\cong \Dcal(C)$ and
$\Dcal(B)$ by \cite{NR}.
\smallskip

For later application, we give an explicit description of $B$ as  Bongartz complement of $X$, cf.~\cite[Section 1]{CTT}: if $c$ is the minimal number of
generators of $\Ext^{1}_{R}(X,R)$ as a module over $C=\End_R(X)$, then there exists an exact sequence
$$E: 0\to{R}\to{M}\to{X^{(c)}}\to{0}$$
with the following properties:
\\(1) any exact sequence $0\to{R}\to{N}\to{X}\to{0}$ has the form $Ef$ for some $f\in \Hom_R(X,X^{(c)})$,
 \\(2) $T=M\oplus X$ is a tilting module,
 \\(3) $M/\tau_X(M)$ is the $\widehat{X}$ -reflection of $R$,
 \\(4) $B\cong \End_R(M)/\tau_X(M)$ as rings, and $B\cong M/\tau_X(M)$ as $R$-modules.
 \end{proof}

\smallskip

\begin{prop} \label{artin} In the situation of Theorem \ref{HRSextended}, assume in addition that $R$ is an artin algebra and $X$ is indecomposable. Then the following hold true.

(1)  If $X$ is projective, then  $B$  is an artin algebra, and the simple $B$-modules are precisely the simple $R$-modules that are not isomorphic to $X/\mathrm{Rad}(X)$.

(2) If $X$ has projective dimension  one, then  $B$  is an artin algebra, and
 viewed as an $R$-module, $B$  complements $X$ to a tilting module $T=B\oplus X$.
 \end{prop}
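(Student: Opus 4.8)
The plan is to deduce both assertions from the explicit descriptions of $B$ obtained in the proof of Theorem~\ref{HRSextended}, after the preliminary observation that $R$ is automatically \emph{hereditary}: over an artin algebra every submodule of a finitely generated module is again finitely generated, so ``semihereditary'' and ``hereditary'' coincide here, and in particular $\End_R(X)$ and $\End_R(M)$ are artin algebras for any finitely generated module. This last fact will settle the ``$B$ is an artin algebra'' part once $B$ has been identified with such an endomorphism ring or with a quotient of $R$. Also, since $X$ is indecomposable finitely generated exceptional, Proposition~\ref{Endofexceptional}(1) gives that $C=\End_R(X)$ is a skew-field.

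For part (1) I would argue as follows. As $X$ is indecomposable projective over an artin algebra, $X\cong eR$ for a primitive idempotent $e$, and the trace $\tau_X(R)$ of $X$ in $R_R$ is the two-sided ideal $ReR$ (using $\Hom_R(eR,R)\cong Re$). By the first case of Theorem~\ref{HRSextended}, $B_R\cong R/\tau_X(R)=R/ReR$ as right $R$-modules; since the accompanying ring epimorphism $\lambda\colon R\ra B$ has $\lambda(1)$ generating $B_R$, it is in fact surjective with kernel $ReR$, so $B\cong R/ReR$ as rings. Being a quotient of the artin algebra $R$, this is an artin algebra, and its simple modules are exactly the simple right $R$-modules $S$ with $Se=0$, i.e.\ with $\Hom_R(eR,S)=0$, i.e.\ those not isomorphic to the top $eR/e\,\mathrm{Rad}(R)=X/\mathrm{Rad}(X)$.

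For part (2), where $X$ has projective dimension one, I would use the Bongartz data displayed at the end of the proof of Theorem~\ref{HRSextended}: the universal extension $E\colon 0\ra R\ra M\ra X^{(c)}\ra 0$ with $c=\dim_C\Ext^1_R(X,R)$ and with $E$ representing a $C$-basis $\xi_1,\dots,\xi_c$ of $\Ext^1_R(X,R)$; the fact that $T=M\oplus X$ is a tilting module; and the identifications $B\cong M/\tau_X(M)$ as an $R$-module and $B\cong\End_R(M)/\tau_X(M)$ as a ring. The key step is to show $\tau_X(M)=0$, equivalently $\Hom_R(X,M)=0$. For this I would apply $\Hom_R(X,-)$ to $E$: since $T$ is tilting we have $\Ext^1_R(X,M)=0$, and $\Ext^1_R(X,X^{(c)})=\Ext^1_R(X,X)^{c}=0$ by exceptionality (note $c<\infty$), so the long exact sequence yields a short exact sequence $0\ra\Hom_R(X,R)\ra\Hom_R(X,M)\ra\Hom_R(X,X^{(c)})\xrightarrow{\ \partial\ }\Ext^1_R(X,R)\ra 0$. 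Under the identification $\Hom_R(X,X^{(c)})\cong C^{c}$ the connecting map $\partial$ sends a tuple to the corresponding $C$-combination of $\xi_1,\dots,\xi_c$, hence is bijective because the $\xi_i$ form a $C$-basis; therefore $\Hom_R(X,M)\cong\Hom_R(X,R)$. Finally $\Hom_R(X,R)=0$: a nonzero map $X\ra R$ would have projective image (as $R$ is hereditary), hence split off a nonzero projective summand of the indecomposable non-projective module $X$, a contradiction. Thus $\tau_X(M)=0$ both as a submodule of $M$ and as an ideal of $\End_R(M)$, so $B\cong M$ as an $R$-module and $B\cong\End_R(M)$ as a ring; the latter is an artin algebra, and the former identifies $B\oplus X$ with the tilting module $T=M\oplus X$, so $B$ complements $X$.

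The main obstacle is precisely the vanishing $\tau_X(M)=0$ in part (2): it hinges on combining two inputs from Theorem~\ref{HRSextended}, namely that the Bongartz complement satisfies $\Ext^1_R(X,M)=0$ \emph{and} that the defining extension $E$ encodes a full $C$-basis of $\Ext^1_R(X,R)$, so that the connecting homomorphism in the $\Hom$--$\Ext$ sequence is an isomorphism and not merely an epimorphism. Once this is in place, the remainder is routine bookkeeping with traces and with the descriptions already provided.
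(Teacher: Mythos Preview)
Your proof is correct and follows essentially the same route as the paper: for (1) you identify $B\cong R/ReR$ via the trace description and read off the simple modules, and for (2) you apply $\Hom_R(X,-)$ to the Bongartz sequence, use bijectivity of the connecting map together with $\Hom_R(X,R)=0$ and $\Ext^1_R(X,X^c)=0$ to obtain $M\in\widehat{X}$, whence $B\cong M$ and $B\cong\End_R(M)$. The only cosmetic differences are that you invoke the tilting property of $T=M\oplus X$ to get $\Ext^1_R(X,M)=0$ up front (the paper extracts this from the long exact sequence itself), and you spell out the splitting argument for $\Hom_R(X,R)=0$ where the paper simply says ``because $X$ is not projective''.
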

 \begin{proof}
 (1) Let $e$ be an idempotent such that $X=eR$. Then $B\cong R/ReR$ is an artin algebra. Moreover, $\widehat{X}=\{Y_R\,\mid\, Ye=0\}$ is closed under submodules, so every simple $B$-module is also a simple $R$-module, and of course it is not isomorphic to $eR/e\,\mathrm{Rad}(R)$. The converse implication is obvious.

(2) If $X$ is indecomposable, then $C=\End_R(X)$ is a skew-field by Proposition \ref{Endofexceptional}, and  $c$ is  the $C$-dimension of $\Ext^1_R(X,R)$, which is
finite because $X$ is finitely generated. Applying
$\Hom_R(X,-)$ to the universal
sequence $0 \rightarrow R \rightarrow M \rightarrow X^{c}
\rightarrow 0$, we obtain a long exact
sequence $$0 \ra \Hom_R(X,R) \ra \Hom_R(X,M) \ra \Hom_R(X,X^c) \ra
\Ext^1_R(X,R)$$ $$\ra \Ext^1_R(X,M) \ra \Ext^1_R(X,X^c) \ra 0$$ (recall
that $R$ is hereditary). The map $\Hom_R(X,X^c) \ra \Ext^1_R(X,R)$ is
bijective by construction,
$\Hom_R(X,R)=0$ because $X$ is not projective, and
$\Ext^1_R(X,X^c)=0$ since $X$ is exceptional. Therefore $M\in\widehat{X}$ is the
$\widehat{X}$-reflection of $R$. Hence, as an $R$-module, $B\cong M$ is the Bongartz complement of $X$, and moreover,  $B\cong \End_R(M)$ is an artin algebra because $M$ is finitely generated.
\end{proof}
\bigskip

\section{Completing recollement diagrams}
\label{sectioncomplete}

Proofs of classical Jordan H\"older theorems typically employ an argument
called butterfly lemma, which helps to compare composition series of various
(sub)objects. The results of this Section will serve a similar purpose for
triangulated or derived categories.

\medskip
\begin{prop} \label{completingrecoll}
Let $A$ be a ring.
Every diagram of the following form, involving a horizontal recollement and
a vertical one,
\[
\xy (-40,0)*{\Dcal(B)}; {\ar (-20,3)*{}; (-30,3)*{}_a}; {\ar
(-30,0)*{}; (-20,0)*{}|{\,b\,}}; {\ar(-20,-3)*{}; (-30,-3)*{}^c};
(-10,0)*{\Dcal(A)}; {\ar (10,3)*{}; (0,3)*{}_d}; {\ar (0,0)*{};
(10,0)*{}|{\,e\,}}; {\ar (10,-3)*{}; (0,-3)*{}^f};
(20,0)*{\Dcal(C)}; {\ar (23,5)*{}; (23,15)*{}_i}; {\ar (20,15)*{};
(20,5)*{}|{\begin{array}{c}\scriptstyle{h}\end{array}}}; {\ar
(17,5)*{}; (17,15)*{}^g}; (20,20)*{\Dcal(E)}; {\ar (17,-15)*{};
(17,-5)*{}^k}; {\ar (20,-5)*{};
(20,-15)*{}|{\begin{array}{c}\scriptstyle{l}\end{array}}}; {\ar
(23,-15)*{}; (23,-5)*{}_m}; (20,-20)*{\Dcal(F)};
\endxy
\]
can be completed to a diagram of the following form, involving two
horizontal and two vertical recollements,
\[
\xy (-60,0)*{\Dcal(B)};
{\ar (-35,3)*{}; (-45,3)*{}_a}; {\ar (-45,0)*{};
(-35,0)*{}|{\,b\,}}; {\ar(-35,-3)*{}; (-45,-3)*{}^c};
(-20,0)*{\Dcal(A)}; {\ar (-17,-15)*{}; (-17,-5)*{}_{md}}; {\ar
(-20,-5)*{};
(-20,-15)*{}|{\begin{array}{c}\scriptstyle{el}\end{array}}}; {\ar
(-23,-15)*{}; (-23,-5)*{}^{kf}}; (-20,-20)*{\Dcal(F)};
{\ar (5,3)*{}; (-5,3)*{}_d}; {\ar (-5,0)*{}; (5,0)*{}|{\,e\,}}; {\ar
(5,-3)*{}; (-5,-3)*{}^f}; {\ar (5,-23)*{}; (-5,-23)*{}^1}; {\ar
(-5,-20)*{}; (5,-20)*{}|{\,1\,}}; {\ar (5,-17)*{}; (-5,-17)*{}_1};
(20,0)*{\Dcal(C)}; {\ar (5,23)*{}; (-5,23)*{}_r}; {\ar (-5,20)*{};
(5,20)*{}|{\,s\,}}; {\ar (5,17)*{}; (-5,17)*{}^t};
(-60,20)*{\Dcal(B)};
{\ar (-35,23)*{}; (-45,23)*{}_u}; {\ar (-45,20)*{};
(-35,20)*{}|{\,v\,}}; {\ar(-35,17)*{}; (-45,17)*{}^w};
(-20,20)*{\Dcal(G)};
{\ar (23,5)*{}; (23,15)*{}_i}; {\ar (20,15)*{};
(20,5)*{}|{\begin{array}{c}\scriptstyle{h}\end{array}}}; {\ar
(17,5)*{}; (17,15)*{}^g}; (20,20)*{\Dcal(E)};
{\ar (17,-15)*{}; (17,-5)*{}^k}; {\ar (20,-5)*{};
(20,-15)*{}|{\begin{array}{c}\scriptstyle{l}\end{array}}}; {\ar
(23,-15)*{}; (23,-5)*{}_m}; (20,-20)*{\Dcal(F)}; {\ar (-57,5)*{};
(-57,15)*{}_1}; {\ar (-60,15)*{};
(-60,5)*{}|{\begin{array}{c}\scriptstyle{1}\end{array}}}; {\ar
(-63,5)*{}; (-63,15)*{}^1}; {\ar (-17,5)*{}; (-17,15)*{}_z}; {\ar
(-20,15)*{};
(-20,5)*{}|{\begin{array}{c}\scriptstyle{y}\end{array}}}; {\ar
(-23,5)*{}; (-23,15)*{}^x};
\endxy
\]
where $G$ is a differential graded algebra.
\end{prop}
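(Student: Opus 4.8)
The missing differential graded algebra $G$ will be produced as the ``coordinate algebra'' of a triangulated subcategory $\Gcal\subseteq\Dcal(A)$ obtained by \emph{composing} the two given localizations. Write $e$ for the quotient functor of the horizontal recollement (with fully faithful left adjoint $d$ and fully faithful right adjoint $f$) and $l$ for the quotient functor of the vertical one (with fully faithful left adjoint $m$ and fully faithful right adjoint $k$, the two embeddings of $\Dcal(F)$ into $\Dcal(C)$). Put $q:=l\circ e\colon\Dcal(A)\to\Dcal(F)$ and $\Gcal:=\Ker q$. The first point is that $q$ is again a Bousfield localization with both adjoints: it has left adjoint $d\circ m$ and right adjoint $f\circ k$, both fully faithful, and the counit/unit isomorphisms $q\circ(d\circ m)\cong\mathrm{id}$, $q\circ(f\circ k)\cong\mathrm{id}$ reduce, via $e\circ d\cong\mathrm{id}$ and $e\circ f\cong\mathrm{id}$, to $l\circ m\cong\mathrm{id}$ and $l\circ k\cong\mathrm{id}$. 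By the standard description of recollements as Bousfield localizations admitting left and right adjoints (see e.g.\ \cite{N}), the inclusion $\Gcal\hookrightarrow\Dcal(A)$ then has a left and a right adjoint, and
\[
\Gcal\;\longrightarrow\;\Dcal(A)\;\xrightarrow{\;q=l\circ e\;}\;\Dcal(F)
\]
is a recollement. This is the left vertical recollement of the completed diagram: the horizontal part between $\Dcal(A)$ and $\Dcal(F)$ is $d\circ m=md$, $l\circ e=el$, $f\circ k=kf$, and the part between $\Gcal$ and $\Dcal(A)$ consists of the inclusion $y$ together with its two adjoints $x,z$.

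\textbf{The horizontal recollement of $\Gcal$.} Since $\Img b=\Ker e\subseteq\Ker q=\Gcal$ and both $\Img b\cong\Dcal(B)$ and $\Gcal$ are reflective and coreflective in $\Dcal(A)$, the subcategory $\Img b$ is reflective and coreflective inside $\Gcal$, its two adjoints being the restrictions to $\Gcal$ of $a$ and $c$, with $b$ the embedding. The Verdier quotient $\Gcal/\Img b$ is identified with $\Dcal(E)$: the equivalence $\Dcal(A)/\Img b\xrightarrow{\sim}\Dcal(C)$ induced by $e$ carries $\Gcal$ exactly onto $\Img h\cong\Dcal(E)$ (with $h$ the embedding $\Dcal(E)\to\Dcal(C)$ of the vertical recollement), so the quotient functor $\Gcal\to\Dcal(E)$ is $s:=g\circ e|_{\Gcal}$, with left adjoint $d\circ h$ and right adjoint $f\circ h$ (both fully faithful). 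This yields the top horizontal recollement $\Dcal(B)\to\Gcal\to\Dcal(E)$, i.e.\ the functors $u,v,w$ and $r,s,t$. What remains is bookkeeping: the labelled compositions $md=d\circ m$, $kf=f\circ k$, $el=l\circ e$ behave as the picture requires, the four squares commute (for instance $e\circ y=h\circ s$ because $h\circ g\cong\mathrm{id}$ on $\Img h$, and $y\circ v=b$), and the functors marked $1$ encode the identities $a\circ b\cong\mathrm{id}_{\Dcal(B)}$ and $q\circ(f\circ k)\cong\mathrm{id}_{\Dcal(F)}$; all of this uses only the recollement axioms.

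\textbf{Realising $\Gcal$ as a derived category.} Because $e$ and $l$ are left adjoints, $q$ preserves small coproducts, so $\Gcal=\Ker q$ is a localizing subcategory of $\Dcal(A)$; hence it is an algebraic triangulated category with arbitrary set-indexed coproducts. By \cite{Ke}, such a category possessing a compact generator is equivalent to $\Dcal(G)$ for a differential graded algebra $G$ --- namely the derived endomorphism algebra of the generator --- and concretely one expects $\Gcal=\Tria\bigl(b(B)\oplus d\,h(E_{0})\bigr)$, where $E_{0}$ is a compact generator of $\Dcal(E)$. Alternatively, one may feed the recollement $\Dcal(B)\to\Gcal\to\Dcal(E)$ just constructed into the results on gluing derived categories of differential graded algebras in \cite{N,NS1} to conclude directly that $\Gcal\simeq\Dcal(G)$ for a differential graded algebra $G$.

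\textbf{Main difficulty.} The crux is precisely this last step. The object $d\,h(E_{0})$ is compact (in fact self-compact) in $\Gcal$, since the quotient functor $s$ preserves coproducts; but the image $b(B)$ of a generator of $\Dcal(B)$ need not be compact in $\Gcal$, because $i^{!}$ (i.e.\ $c$) need not preserve coproducts. This is the familiar obstruction to ``gluing'' compact generators through a recollement, and it must be overcome using the extra structure available here --- that $\Dcal(A)$ is the derived category of a ring, so that $\Gcal$ is a localizing subcategory of a compactly generated algebraic category --- together with the differential graded enhancement machinery of \cite{Ke,N,NS1}. Everything else in the argument is formal manipulation of the recollement identities.
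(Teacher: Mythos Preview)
Your construction of the two new recollements is correct and essentially coincides with the paper's: you build $\Gcal=\Ker(l\circ e)$, observe that $q=l\circ e$ has both adjoints $d\circ m$ and $f\circ k$, hence produces the left vertical recollement, and then identify the top horizontal recollement by noting that $\Img b\subseteq\Gcal$ with quotient $\Dcal(E)$. The paper does the same thing, citing \cite[Theorem 2.4(b)]{PS} for the completion of the half-recollement and then writing down the functors $u,v,w,r,s,t$ explicitly.

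The gap is exactly where you locate your ``main difficulty'', and it is not actually difficult. You search for a compact generator of $\Gcal$ by trying to glue $b(B)$ and $d\,h(E_0)$, and correctly observe that $b(B)$ need not be compact in $\Gcal$. But there is a much simpler candidate, which is what the paper uses: take $z(A)$, the image of $A$ under the \emph{left adjoint} $z$ of the inclusion $y\colon\Gcal\hookrightarrow\Dcal(A)$. For every $M\in\Gcal$ one has
\[
\Hom_{\Gcal}(z(A),M)\;\cong\;\Hom_{\Dcal(A)}(A,\,y(M))\;=\;\Hom_{\Dcal(A)}(A,M),
\]
and since $\Gcal$ is localizing (so coproducts in $\Gcal$ agree with those in $\Dcal(A)$), the compactness and generation properties of $A$ in $\Dcal(A)$ transfer immediately to $z(A)$ in $\Gcal$. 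This is recorded in \cite[4.3.6, 4.4.8]{N}, which the paper cites. Keller's theorem \cite{Ke} then gives $\Gcal\simeq\Dcal(G)$ with $G$ the derived endomorphism algebra of $z(A)$. No gluing argument or appeal to \cite{NS1} is needed; the point is simply that a left adjoint of a coproduct-preserving embedding sends compact generators to compact generators.
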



\medskip
\begin{proof}
First we fill in the bottom square in the diagram by putting all
functors from $\Dcal(F)$ to itself to be identity, and by using the
compositions $m \cdot d$, $e \cdot l$ and $k \cdot f$ to connect
$\Dcal(A)$ and $\Dcal(F)$. Next we can complete by \cite[Theorem
2.4(b)]{PS} the half-recollement involving $\Dcal(A)$ and $\Dcal(F)$
to get some triangulated category in the middle of the top row. By
\cite[4.3.6, 4.4.8]{N}, $z(A)$ is a compact generator of this
triangulated category, and hence by \cite[Theorem 4.3]{K} this is
equivalent to the derived category of the differential graded
endomorphism algebra $G$ of $z(A)$.

\medskip

Now we complete the upper left square: The embedding functor $v$
exists, since the composition $1_B \cdot (b \cdot e) \cdot l$
vanishes and thus the image of $b$ must be inside $\Dcal(G)$. The
right adjoint $w$ is defined by $w:= y \cdot c \cdot 1_B$, and the
composition satisfies $v \cdot w = v \cdot y \cdot c \cdot 1_B = 1_B
\cdot b \cdot c \cdot 1_B = 1_B$. Similarly, the left adjoint $u$ is
defined by $u:= y \cdot a \cdot 1_B$ and composition satisfies $v
\cdot u = v \cdot y \cdot a \cdot 1_B = 1_B \cdot b \cdot a \cdot
1_B = 1_B$.

\medskip

To complete the upper right square, we define $s$ be $s:= y \cdot e
\cdot g (= y \cdot e \cdot i)$. Then $r:= h \cdot d \cdot z$ is a
left adjoint: The image of of $h \cdot d$ is contained in the kernel
of $e \cdot l$ and thus in the image of the embedding $y$.
Therefore, $r \cdot s = h \cdot d \cdot z \cdot y \cdot e \cdot g =
h \cdot d \cdot e \cdot g = h \cdot g = 1_E$. Similarly, $t:= h
\cdot f \cdot x$ is a right adjoint: Again, the image of $h \cdot f$
is contained in the image of the embedding $y$, and therefore $t
\cdot s = h \cdot f \cdot x \cdot y \cdot e \cdot g = h \cdot f
\cdot e \cdot g = h \cdot g = 1_E$.

\medskip

Finally, we check that the top row really forms a recollement. By
definition the functors $v$, $r$ and $t$ are full embeddings. The
composition $v \cdot s$ vanishes, since $v \cdot s  = v \cdot y \cdot e \cdot g = 1_B
\cdot b \cdot e\cdot g = 0$. The kernel of $s$ is indeed the kernel of
$y\cdot e$, so it is precisely $\Dcal(B)$. It remains to check the
existence of the canonical triangles. Let $X$ be in an object in
$\Dcal(G)$ and write it as middle term of a canonical triangle (for
the given recollement in the second row) $Y \rightarrow X
\rightarrow Z \leadsto$, where $Y$ is in $\Dcal(B)$ and $Z$ is in
$\Dcal(C)$. Since $X$ is in $\Dcal(G)$, its $e$-image $Z$ is in the
kernel of $l$ and thus it is in $\Dcal(E)$, as required. Thus the
given triangle also is a canonical triangle for the first row. The
second canonical triangle for the second row, for the given object
$X$, is $U \rightarrow X \rightarrow V \leadsto$ with $U$ in
$\Dcal(C)$ and $V$ in $\Dcal(B)$. This triangle serves as a
canonical triangle for the first row as well, once we have shown
that $U$ is already in $\Dcal(E)$: The object $U$ is obtained from
$X$ by applying $y \cdot e$, and thus the image of $U$ under $l$ is
the image of $X$ under $y \cdot e \cdot l = s \cdot h \cdot l = 0$.
\end{proof}

\medskip

Dually one can prove the following.

\begin{prop} \label{completingrecoll-dual}
Let $A$ be an algebra. Every diagram of the following form,
involving a horizontal recollement and a vertical one,
\[
\xy (-60,0)*{\Dcal(B)}; {\ar (-57,5)*{}; (-57,15)*{}_i}; {\ar
(-60,15)*{};
(-60,5)*{}|{\begin{array}{c}\scriptstyle{h}\end{array}}}; {\ar
(-63,5)*{}; (-63,15)*{}^g}; (-60,20)*{\Dcal(E)}; {\ar (-63,-15)*{};
(-63,-5)*{}^k}; {\ar (-60,-5)*{};
(-60,-15)*{}|{\begin{array}{c}\scriptstyle{l}\end{array}}}; {\ar
(-57,-15)*{}; (-57,-5)*{}_m}; (-60,-20)*{\Dcal(F)}; {\ar (-40,3)*{};
(-50,3)*{}_a}; {\ar (-50,0)*{}; (-40,0)*{}|{\,b\,}};
{\ar(-40,-3)*{}; (-50,-3)*{}^c}; (-30,0)*{\Dcal(A)}; {\ar
(-10,3)*{}; (-20,3)*{}_d}; {\ar (-20,0)*{}; (-10,0)*{}|{\,e\,}};
{\ar (-10,-3)*{}; (-20,-3)*{}^f}; (0,0)*{\Dcal(C)};
\endxy
\]
can be completed to a diagram of the following form, involving two
horizontal and two vertical recollements,
\[
\xy (-60,0)*{\Dcal(B)};
{\ar (-35,3)*{}; (-45,3)*{}_a}; {\ar (-45,0)*{};
(-35,0)*{}|{\,b\,}}; {\ar(-35,-3)*{}; (-45,-3)*{}^c};
(-20,0)*{\Dcal(A)}; {\ar (-17,-15)*{}; (-17,-5)*{}}; {\ar
(-20,-5)*{}; (-20,-15)*{}}; {\ar (-23,-15)*{}; (-23,-5)*{}};
(-20,-20)*{\mathcal{X}};
{\ar (5,3)*{}; (-5,3)*{}_d}; {\ar (-5,0)*{}; (5,0)*{}|{\,e\,}}; {\ar
(5,-3)*{}; (-5,-3)*{}^f}; {\ar (5,-23)*{}; (-5,-23)*{}}; {\ar
(-5,-20)*{}; (5,-20)*{}}; {\ar (5,-17)*{}; (-5,-17)*{}};
(20,0)*{\Dcal(C)}; {\ar (-57,5)*{}; (-57,15)*{}_i}; {\ar
(-60,15)*{};
(-60,5)*{}|{\begin{array}{c}\scriptstyle{h}\end{array}}}; {\ar
(-63,5)*{}; (-63,15)*{}^g}; (-60,20)*{\Dcal(E)}; {\ar (-63,-15)*{};
(-63,-5)*{}^k}; {\ar (-60,-5)*{};
(-60,-15)*{}|{\begin{array}{c}\scriptstyle{l}\end{array}}}; {\ar
(-57,-15)*{}; (-57,-5)*{}_m}; (-60,-20)*{\Dcal(F)}; {\ar
(-35,23)*{}; (-45,23)*{}_1}; {\ar (-45,20)*{}; (-35,20)*{}|{\,1\,}};
{\ar (-35,17)*{}; (-45,17)*{}^1}; (-20,20)*{\Dcal(E)}; {\ar
(17,-15)*{}; (17,-5)*{}^1}; {\ar (20,-5)*{};
(20,-15)*{}|{\begin{array}{c}\scriptstyle{1}\end{array}}}; {\ar
(23,-15)*{}; (23,-5)*{}_1}; (20,-20)*{\Dcal(C)}; {\ar (-17,5)*{};
(-17,15)*{}_{ai}}; {\ar (-20,15)*{};
(-20,5)*{}|{\begin{array}{c}\scriptstyle{hb}\end{array}}}; {\ar
(-23,5)*{}; (-23,15)*{}^{cg}}; {\ar (-35,-17)*{}; (-45,-17)*{}};
{\ar (-45,-20)*{}; (-35,-20)*{}}; {\ar(-35,-23)*{}; (-45,-23)*{}};
\endxy
\]
where $\mathcal{X}$ is a triangulated category.
\end{prop}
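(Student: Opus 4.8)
The statement is the dual of Proposition \ref{completingrecoll}, obtained by turning the vertical recollement upside down (so the "subcategory" $\Dcal(E)$ of $\Dcal(A)$ now sits on top of the vertical recollement rather than below it, and $\Dcal(F)$ sits below), while keeping the horizontal recollement with $\Dcal(B)$ on the left. The target diagram has: a trivial top row consisting of the identity recollement of $\Dcal(E)$ by $\Dcal(B)$ and $\Dcal(C)$-no, more precisely the top row is the identity recollement $\Dcal(E)\to\Dcal(E)\to\Dcal(E)$ flanked so that the new middle-column vertical recollement is $\Dcal(E)\to\Dcal(A)\to\mathcal{X}$ with composed functors $ai$, $hb$, $cg$ upward and the new bottom row $\mathcal{X}\to\Dcal(C)\to\Dcal(C)$ using identities. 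So the content to be produced is the triangulated category $\mathcal{X}$ filling the lower-left corner, together with all functors, and the verification that the four resulting recollements (two horizontal, two vertical) are genuine recollements.

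\textbf{Key steps.} First I would handle the easy corners by pure formalism, exactly mirroring the proof of Proposition \ref{completingrecoll}: the top row is filled with identity functors on $\Dcal(E)$, and $\Dcal(E)$ embeds into $\Dcal(A)$ via $a\circ i$ (with right adjoint $h\circ b$ and "left adjoint" $c\circ g$), because the given vertical recollement tells us $\Dcal(E)=\Img(i)$ is a subcategory of $\Dcal(A)$ under the recollement, and composing with the horizontal one is harmless; these composites satisfy the adjunction and vanishing identities by the same bookkeeping (e.g. $hb\cdot ai = h\cdot(ba)\cdot i = h\cdot i = 1_E$ using that $(a,b)$ and $(g,i)$ are adjoint with $ba=1$, $hi=1$). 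Next, the genuinely new object: I would complete the \emph{half-recollement} between $\Dcal(A)$ and $\Dcal(C)$ (the right part of the horizontal recollement) but now relativized so as to land inside a category $\mathcal{X}$ that is the "kernel" appropriate to the vertical data. Concretely, $\mathcal{X}$ should be realized as $\Ker(\text{the composite }\Dcal(A)\to\Dcal(C)\text{-side functor that also sees }l)$; dually to Proposition \ref{completingrecoll}, apply \cite[Theorem~2.4(b)]{PS} to complete the half-recollement on the $\Dcal(F)$-side, obtaining $\mathcal{X}$ as a triangulated category, and then define the bottom row $\mathcal{X}\to\Dcal(C)\to\Dcal(C)$ with identity functors on $\Dcal(C)$ and the induced functors $\mathcal{X}\hookrightarrow\Dcal(A)\to\Dcal(C)$. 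The remaining functors in the lower-left vertical recollement $\Dcal(F)\to\mathcal{X}$ come from restricting $k,l,m$ (via $d,e,f$) exactly as the labels in the diagram indicate, and the left vertical recollement is again all identities on $\Dcal(B)$.

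\textbf{Verification and main obstacle.} Finally I would check the two recollement axioms that are not immediate: (i) the lower-left square is a vertical recollement $\Dcal(F)\to\mathcal{X}\to\Dcal(C)$ — this follows because the given vertical recollement of $\Dcal(A)$ by $\Dcal(E)$ and $\Dcal(F)$ restricts along the embedding $\mathcal{X}\hookrightarrow\Dcal(A)$, using that the canonical triangles for an object of $\mathcal{X}$ stay inside $\mathcal{X}$ (the outer terms of the triangle $i i^!(M)\to M\to f f^!(M)$-type sequences lie in the right subcategories, precisely as in the last paragraph of the proof of Proposition \ref{completingrecoll}); and (ii) the new middle vertical recollement $\Dcal(E)\to\Dcal(A)\to\mathcal{X}$ — here one must verify the canonical triangles, which amounts to showing that for $M\in\Dcal(A)$ the term cut out by $\Dcal(E)$ and the term cut out by $\mathcal{X}$ assemble into the required triangle; this is where compatibility of the horizontal and vertical recollements is used, and it is the step most likely to need care, since one is essentially asserting that $\mathcal{X}$ together with $\Dcal(E)$ "glues" $\Dcal(A)$. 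I expect the main obstacle to be bookkeeping the adjunctions and the canonical triangles simultaneously for all four recollements without circularity; the homological input is light, since (unlike Proposition \ref{completingrecoll}, which needed a compact generator and \cite[Theorem~4.3]{K} to identify the middle term as a dg-algebra's derived category) here $\mathcal{X}$ is only required to be a triangulated category, so \cite[Theorem~2.4(b)]{PS} suffices and no realization as a module or dg category is needed. The whole argument is the formal dual of the proof of Proposition \ref{completingrecoll}, with the vertical recollement reflected, so I would write it as "Dually one proves\ldots" and spell out only the places where the labels $ai$, $hb$, $cg$ and the placement of $\mathcal{X}$ differ.
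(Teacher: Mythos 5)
Your proposal is correct and takes essentially the same approach as the paper, which in fact gives no explicit argument here beyond "dually one can prove the following": your outline is precisely that dualization — compose to get the full embedding $hb\colon\Dcal(E)\to\Dcal(A)$ with adjoints $ai$ and $cg$, invoke \cite[Theorem 2.4]{PS} to complete this to a recollement whose quotient is $\mathcal{X}$, fill in the remaining squares by composing functors, and check the canonical triangles as in the last paragraph of the proof of Proposition \ref{completingrecoll}, with no compact-generator/dg-algebra step needed since $\mathcal{X}$ is only claimed to be triangulated. The one slip is in your description of where $\mathcal{X}$ comes from: the half-recollement to be completed is the one on the $\Dcal(E)$-side (the embedded subcategory of $\Dcal(A)$), not "the $\Dcal(F)$-side" or a kernel of a functor towards $\Dcal(C)$ — $\mathcal{X}$ is the Verdier quotient $\Dcal(A)/\Dcal(E)$, realized inside $\Dcal(A)$ as the kernel of $ai$ (resp. $cg$), into which $\Dcal(F)$ and $\Dcal(C)$ then embed as the outer terms of the bottom recollement.
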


It follows from the proposition that the triangulated category
$\mathcal{X}$ has a recollement structure filtered by the two
derived module categories $\Dcal(F)$ and $\Dcal(C)$. But in general
we don't know whether $\mathcal{X}$ itself is equivalent to a
derived module category.

\medskip
For hereditary artin algebras, we are now able to generalize  Theorem \ref{HRSextended} to an
 exceptional and compact complex $X$.

\begin{cor} \label{HRSextended-more}
Let $A$ be a hereditary artin algebra, and $X$
an exceptional and compact complex in $\Dcal(A)$. Then there exists
a hereditary artin algebra $B$ with a homological ring epimorphism $A\to B$ and a recollement
\[
\xy (-45,0)*{\mathcal \Dcal(B)}; {\ar (-25,2)*{}; (-35,2)*{}};
{\ar (-35,0)*{}; (-25,0)*{}^{}}; {\ar(-25,-2)*{}; (-35,-2)*{}};
(-15,0)*{{\mathcal D}(A)}; {\ar (5,2)*{}; (-5,2)*{}_{}};
{\ar (-5,0)*{}; (5,0)*{}}; {\ar (5,-2)*{}; (-5,-2)*{}};
(15,0)*{{\mathcal \Dcal(C)}};
\endxy
\]

where $C=\End_A(X)$ is the endomorphism ring of $X$.
\end{cor}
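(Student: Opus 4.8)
The plan is to reduce the assertion to the module case of Theorem~\ref{HRSextended} by an induction on the number of nonzero cohomology modules of $X$, and then to read off the recollement from Theorem~\ref{single}. Since $A$ is hereditary we may write $X\cong\bigoplus_{i=1}^{s}Y_i[k_i]$ with $Y_i=H^{-k_i}(X)$ the nonzero (finitely presented) cohomology modules of $X$ and $k_1<\dots<k_s$, as in~\S\ref{exceptseq}; each $Y_i$ is exceptional, because $\Hom_{\Dcal(A)}(Y_i[k_i],Y_i[k_i+n])$ is a direct summand of $\Hom_{\Dcal(A)}(X,X[n])$ for $n\neq 0$. For $Z$ in a derived module category write $Z^{\perp}=\{M\mid\Hom(Z[n],M)=0\ \text{for all}\ n\in\Z\}$ (in the ambient category). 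I shall prove by induction on $s$ that there exist a hereditary artin algebra $B$ and a homological ring epimorphism $\lambda\colon A\to B$ such that the restriction functor $\lambda_*\colon\Dcal(B)\to\Dcal(A)$ is a full embedding with essential image $X^{\perp}$; the case $s=0$, that is $X=0$, is trivial with $B=A$.

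For the inductive step, apply Theorem~\ref{HRSextended} to the finitely presented exceptional module $Y_s$. This yields a hereditary artin algebra $B_0$ (it is artin because, by the Bongartz-complement description in the proof of Theorem~\ref{HRSextended}, $B_0$ is a quotient of the endomorphism ring of a finitely generated $A$-module), the universal localization $\lambda_0\colon A\to B_0$, which is a homological ring epimorphism since $A$ is hereditary, and a recollement $\Dcal(B_0)\to\Dcal(A)\to\Dcal(\End_A Y_s)$ in which $(\lambda_0)_*$ is the left-hand full embedding and $j_!$ identifies the right-hand term with $\Tria Y_s\subseteq\Dcal(A)$; as $Y_s$ is compact, the adjunctions of this recollement give $\Img((\lambda_0)_*)=Y_s^{\perp}$. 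Now $X':=\bigoplus_{i<s}Y_i[k_i]$ is a direct summand of $X$, hence compact and exceptional, and it lies in $Y_s^{\perp}$: for $n\neq 0$ the group $\Hom_{\Dcal(A)}(Y_s[k_s],X'[n])$ is a summand of $\Hom_{\Dcal(A)}(X,X[n])=0$, while $\Hom_{\Dcal(A)}(Y_s[k_s],X')=\bigoplus_{i<s}\Ext_A^{\,k_i-k_s}(Y_s,Y_i)=0$ because $k_i-k_s<0$. Hence $X'=(\lambda_0)_*(\bar X')$ for some $\bar X'\in\Dcal(B_0)$; full faithfulness of $(\lambda_0)_*$ shows $\bar X'$ is exceptional, and its cohomologies restrict along $\lambda_0$ to $Y_1,\dots,Y_{s-1}$, so they are finitely generated over $B_0$ and $\bar X'$ has exactly $s-1$ nonzero cohomology modules; since $B_0$ is hereditary artin, $\bar X'\cong\bigoplus_{i<s}H^{-k_i}(\bar X')[k_i]$ is a finite direct sum of shifts of finitely generated modules of projective dimension at most one, hence compact. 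By the induction hypothesis there are a hereditary artin algebra $B$ and a homological ring epimorphism $\mu\colon B_0\to B$ with $\mu_*$ a full embedding of essential image $(\bar X')^{\perp}$ (formed in $\Dcal(B_0)$). Put $\lambda:=\mu\circ\lambda_0$. Then $\lambda$ is a ring epimorphism and $\lambda_*=(\lambda_0)_*\circ\mu_*$ is a composition of full embeddings, so $\lambda$ is a homological ring epimorphism by the criterion quoted after the definition of homological epimorphism, and $\Img(\lambda_*)=(\lambda_0)_*(\Img\mu_*)=Y_s^{\perp}\cap X'^{\perp}=X^{\perp}$, using $X=X'\oplus Y_s[k_s]$. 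This completes the induction.

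Now apply Theorem~\ref{single} to the compact exceptional object $X$ together with $\widetilde Y:=\lambda_*(B_B)$. On $\Tria\widetilde Y=\Img(\lambda_*)$ the functor $\Hom_{\Dcal(A)}(\widetilde Y,-)$ identifies, through the equivalence $\lambda_*$, with $\Hom_{\Dcal(B)}(B,-)$, which preserves coproducts; so $\widetilde Y$ is exceptional and self-compact. Moreover $\Hom_{\Dcal(A)}(X[n],\widetilde Y)=0$ for all $n$ since $\widetilde Y\in\Img(\lambda_*)=X^{\perp}$; and $X\oplus\widetilde Y$ generates $\Dcal(A)$, for if $\Hom_{\Dcal(A)}((X\oplus\widetilde Y)[n],M)=0$ for all $n$ then $M\in X^{\perp}=\Img(\lambda_*)$, say $M=\lambda_*(D)$, whence $H^{n}(D)=\Hom_{\Dcal(A)}(\widetilde Y[-n],M)=0$ for all $n$ and $M=0$. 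Theorem~\ref{single} therefore provides a recollement with $\Dcal(C)$ on the right, $C=\End_{\Dcal(A)}(X)=\End_A(X)$, in which $X=j_!(C)$ generates the right-hand term. In any such recollement the left-hand term, viewed inside $\Dcal(A)$, equals $\Ker(j^*)$, and this is $X^{\perp}$ because $X$ is compact and generates the right-hand term; by the inductive claim $X^{\perp}$ is the image of the full embedding $\lambda_*$, hence equivalent to $\Dcal(B)$. Replacing the left-hand term by $\Dcal(B)$ gives the asserted recollement, which together with the homological ring epimorphism $\lambda\colon A\to B$ onto the hereditary artin algebra $B$ is the statement.

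The main obstacle is the bookkeeping with perpendicular subcategories of $\Dcal(A)$: that $\Img((\lambda_0)_*)$ is exactly $Y_s^{\perp}$, that the summand $X'$ of $X$ lands inside $Y_s^{\perp}$ (where both the exceptionality of $X$ and the ordering $k_1<\dots<k_s$ are used), and that the complex $\bar X'$ induced over $B_0$ is again compact, exceptional, and has one fewer nonzero cohomology module, so that the induction can proceed. Once this is in place, composing the homological epimorphisms and invoking Theorem~\ref{single} is routine.
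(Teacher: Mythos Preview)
Your argument is correct and shares its inductive skeleton with the paper's proof: both peel off summands of $X$ one at a time via Theorem~\ref{HRSextended}, producing a chain of homological epimorphisms between hereditary artin algebras. There are, however, two genuine differences in execution.

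First, the paper decomposes $X$ into its \emph{indecomposable} summands, arranged as an exceptional sequence, and appeals to Proposition~\ref{artin} (which assumes indecomposability) for the artinianness of each intermediate ring. You instead peel off the entire top cohomology module $Y_s$, which may be decomposable. Your one-line justification that $B_0$ is artin (``by the Bongartz-complement description'') literally applies only when $Y_s$ has projective dimension one; if $Y_s$ has a projective summand you also need the first case in the proof of Theorem~\ref{HRSextended}, or simply the observation that in every case $B_0$ is finitely generated as an $A$-module and hence an artin algebra over the ground ring. This is a minor gap, easily filled.

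Second, and more interestingly, the paper assembles the final recollement by invoking Proposition~\ref{completingrecoll-dual} at each inductive step to merge two recollements into one whose right-hand term is $\Tria(X_s\oplus\cdots\oplus X_i)\cong\Dcal(\End_A(X_s\oplus\cdots\oplus X_i))$. You bypass this machinery entirely: you compose the homological epimorphisms, identify $\Img(\lambda_*)$ with $X^{\perp}$ using compactness of $X$, and then verify the hypotheses of Theorem~\ref{single} directly for the pair $(X,\lambda_*(B))$. Your route is more self-contained for this corollary; the paper's route has the advantage of exhibiting the right-hand side explicitly as $\Tria X$ and of dovetailing with the proof of Theorem~\ref{maintheorem}, where the recollement-completion propositions are needed in any case.
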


\begin{proof} Assume $X$ is multiplicity free. By Subsection \ref{exceptseq},
the indecomposable direct summands of $X$ can be ordered into
an exceptional sequence, say $(X_1, X_2, \ldots, X_s)$, in
$\Dcal(A)$. For each pair $i<j$, there is no nontrivial homomorphism
from $X_j$ to $X_i$. Each $X_i$ is a shift of an indecomposable, finitely presented,
exceptional  $A$-module, so we apply Theorem \ref{HRSextended} and Proposition \ref{artin}
 iteratively to $X_s$, $X_{s-1}$,
$\ldots$, and $X_1$. In the first step we obtain a hereditary artin
algebra $B_s$ such that we have a recollement
\vspace{0.1cm}
\[
\xy (-47,0)*{\Dcal(B_s)}; {\ar (-25,2)*{}; (-35,2)*{}};
{\ar (-35,0)*{}; (-25,0)*{}^{}}; {\ar(-25,-2)*{}; (-35,-2)*{}};
(-15,0)*{{\mathcal D}(A)}; {\ar (5,2)*{}; (-5,2)*{}_{}};
{\ar (-5,0)*{}; (5,0)*{}}; {\ar (5,-2)*{}; (-5,-2)*{}};
(17,0)*{\Dcal(C_s)};
\endxy
\]

\vspace{0.2cm}
where $C_s=\End_A(X_s)$ is the endomorphism ring of $X_s$, and $B_s$ is a universal localization of $A$. Now $X_1$,
$\ldots$, $X_{s-1}$ belong to $\Dcal(B_s)$. Applying Theorem
\ref{HRSextended} to $X_{s-1}$ and $B_s$, we obtain a hereditary
artin algebra $B_{s-1}$ such that
\vspace{0.1cm}
 \[
\xy (-48,0)*{\Dcal(B_{s-1})}; {\ar (-25,2)*{}; (-35,2)*{}};
{\ar (-35,0)*{}; (-25,0)*{}^{}}; {\ar(-25,-2)*{}; (-35,-2)*{}};
(-15,0)*{{\mathcal D}(B_s)}; {\ar (5,2)*{}; (-5,2)*{}_{}};
{\ar (-5,0)*{}; (5,0)*{}}; {\ar (5,-2)*{}; (-5,-2)*{}};
(18,0)*{\Dcal(C_{s-1})};
\endxy
\]

\vspace{0.2cm}
where $C_{s-1}=\End_{B_s}(X_{s-1})=\End_A(X_{s-1})$ is the endomorphism ring of $X_{s-1}$, and $B_{s-1}$ is a universal localization of $B_s$.
Now we are in the situation of Proposition
\ref{completingrecoll-dual}. By completing the recollements we
obtain a new recollement of $\Dcal(A)$ filtered by $\Dcal(B_{s-1})$
and some triangulated category $\mcx$, which again admits a
recollement filtered by $\Dcal(C_{s-1})$ and $\Dcal(C_s)$. By
construction, $\Dcal(C_{i}) \cong \Tria X_i$ for $i=s, s-1$,
hence $\mcx \cong \Tria (X_s\oplus X_{s-1}) \cong \Dcal(\widetilde{C})$
for $\widetilde{C}=\End_A(X_s\oplus X_{s-1})$. Moreover, the composition $A\to B_s\to B_{s-1}$ is a homological epimorphism. To finish the proof we just
have to continue iteratively.
\end{proof}

\medskip

In the situation of Proposition \ref{completingrecoll},
the image of $F$ under the functor $md$ is always exceptional and
compact. Thanks to the corollary, if $A$ is a hereditary artin
algebra, we can choose $G$ to be also hereditary and artin. This
fact will be used later in the inductive proof of our main result Theorem
\ref{maintheorem}.

\bigskip
\section{Lifting and restricting recollements, derived simplicity}
\label{sectioncompare}

In the literature, various kinds of recollements are used for
different purposes; these involve bounded, left or right bounded
or unbounded derived categories, homotopy categories of
projectives. 
Although we are focussing on unbounded derived categories in the
main part of this article, we collect in this Section some
information on comparing recollements of different types. Roughly
speaking, lifting to 'larger' categories always is possible, while
restricting to 'smaller' categories is problematic. We do not
provide a final answer to the problem, whether the existence of a
recollement always implies the existence of another one that can
be restricted. Nor do we solve the question, which functors in an
existing recollement do restrict.

\medskip

Let $A$, $B$ and $C$ be any rings. Recall that Mod-$A$ denotes the
category of arbitrary right $A$-modules, and write Proj-$A$ for the
full subcategory of all projective modules. We ask for relations
between the following recollements:

$$\begin{array}{cccccccc}
(R0) & & & K^b(\text{Proj-}B) &\xy {\ar (-30,3)*{}; (-40,3)*{}};
{\ar (-40,1)*{}; (-30,1)*{}^{}}; {\ar(-30,-1)*{}; (-40,-1)*{}};
\endxy & K^b(\text{Proj-}A) & \xy {\ar (-30,3)*{}; (-40,3)*{}};
{\ar (-40,1)*{}; (-30,1)*{}^{}}; {\ar(-30,-1)*{}; (-40,-1)*{}};
\endxy & K^b(\text{Proj-}C) \vspace{0.4cm}\\

(R1) & & & \Dcal^b(B) & \xy {\ar (-30,3)*{}; (-40,3)*{}}; {\ar
(-40,1)*{}; (-30,1)*{}^{}}; {\ar(-30,-1)*{}; (-40,-1)*{}};
\endxy & \Dcal^b(A) & \xy {\ar (-30,3)*{}; (-40,3)*{}}; {\ar
(-40,1)*{}; (-30,1)*{}^{}}; {\ar(-30,-1)*{}; (-40,-1)*{}};
\endxy & \Dcal^b(C) \vspace{0.4cm} \\

(R2) & & & \Dcal^-(B) & \xy {\ar (-30,3)*{}; (-40,3)*{}}; {\ar
(-40,1)*{}; (-30,1)*{}^{}}; {\ar(-30,-1)*{}; (-40,-1)*{}};
\endxy & \Dcal^-(A) & \xy {\ar (-30,3)*{}; (-40,3)*{}}; {\ar
(-40,1)*{}; (-30,1)*{}^{}}; {\ar(-30,-1)*{}; (-40,-1)*{}};
\endxy & \Dcal^-(C) \vspace{0.4cm} \\

(R3) & & & \Dcal(B) & \xy {\ar (-30,3)*{}; (-40,3)*{}}; {\ar
(-40,1)*{}; (-30,1)*{}^{}}; {\ar(-30,-1)*{}; (-40,-1)*{}};
\endxy & \Dcal(A) & \xy {\ar (-30,3)*{}; (-40,3)*{}}; {\ar
(-40,1)*{}; (-30,1)*{}^{}}; {\ar(-30,-1)*{}; (-40,-1)*{}};
\endxy & \Dcal(C)\\
\end{array}$$

\medskip
\begin{lem} If the ring $A$ has a recollement of the form $(R1)$, then
it has a recollement of the form $(R2)$. The converse holds true
if $A$ has finite global dimension. \label{r1tor2}
\end{lem}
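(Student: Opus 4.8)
The plan is to use the well-known compatibility between bounded and bounded-above derived categories under restriction of scalars and the functorial constructions attached to a recollement, together with the ``lifting'' machinery referred to in the paper (e.g.\ \cite{PS,N}). First I would recall that a recollement of the form $(R1)$ is, by Proposition \ref{chooseBwithhomepi} and Theorem \ref{single}, encoded by a pair of exceptional objects in $\Dcal^b(A)$, equivalently by a homological ring epimorphism $A\to B$ together with a compact exceptional object giving $\Dcal^b(C)$; the functor $j_!$ embeds $\Dcal^b(C)$ as $\Tria X$ for a compact exceptional $X$, and $i_*$ is the restriction $\lambda_*$ along $A\to B$. The key point for the forward direction is that all six functors in $(R1)$ extend to the bounded-above categories: the restriction functor $\lambda_*\colon\Dcal^-(B)\to\Dcal^-(A)$ is still fully faithful (a homological epimorphism stays homological, and fully faithfulness of $\lambda_*$ on the unbounded level, hence on $\Dcal^-$, follows from $\Tor^A_i(B,B)=0$ as recorded after the definition of homological epimorphism), and the other adjoints $i^*=-\lten_A B$, $i^!=\mathbf{R}\Hom_A(B,-)$, $j_!$, $j^!$, $j_*$ extend by taking homotopy-colimit/totalization arguments, or simply by invoking \cite[Theorem 2.4]{PS} or \cite[Section 5]{N}, which produce the bounded-above recollement from the compact generators. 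So the forward implication reduces to checking that the defining data (adjunctions, the two gluing triangles, the vanishing $i^!j_*=0$) are preserved, which is formal once the functors are in place.

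For the converse, assume $A$ has finite global dimension and a recollement of the form $(R2)$. Finite global dimension means every object of $\Dcal^-(A)$ with bounded cohomology already lies in $\Dcal^b(A)$ up to quasi-isomorphism --- more precisely, $\Dcal^b(A)$ is the full subcategory of $\Dcal^-(A)$ of objects with bounded cohomology, and over a ring of finite global dimension this is also the subcategory of objects isomorphic to bounded complexes of projectives, i.e.\ the compact objects. The plan is then to show that each of the six functors of the $(R2)$-recollement restricts to $\Dcal^b$. The outer categories $\Dcal^-(B)$ and $\Dcal^-(C)$ are themselves derived categories of rings; crucially, $B$ and $C$ inherit finite global dimension from $A$ --- this is exactly the transfer of finitistic/global dimension along a recollement recorded in the introduction via \cite{Hap2} (the middle term has finite global dimension iff the outer terms do). Hence on all three categories ``bounded cohomology'' coincides with ``compact''. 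Since $i_*=\lambda_*$ is exact and has finite cohomological amplitude (again by finite global dimension of $B$, or by $\projdim {}_AB<\infty$), it sends bounded complexes to bounded complexes; its adjoints $i^*,i^!$ have finite amplitude for the same reason and hence preserve boundedness; and $j_!,j^!,j_*$ likewise have finite amplitude because $j_!=\Tria X$-inclusion with $X$ compact in $\Dcal^b(A)$, so $j_!$ preserves compactness, while $j^!$ is a Verdier quotient functor and $j_*$ its right adjoint of finite amplitude. Restricting all six functors to the subcategories of compact objects, and using that the two gluing triangles of a compact object have compact outer terms (each term is the image of a compact object under an amplitude-bounded functor), yields a recollement of the form $(R1)$.

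The main obstacle I expect is the converse direction, specifically verifying that the functors $j^!$ and $j_*$ (and $i^!$) have \emph{finite cohomological amplitude} and therefore preserve boundedness: full faithfulness and the adjunctions restrict for free, but boundedness of, say, $j_*(Z)$ for $Z\in\Dcal^b(C)$ is not automatic from boundedness of $Z$ alone --- one genuinely needs the finite-global-dimension hypothesis, channelled through the fact (from \cite{Hap2}) that $C$ has finite global dimension, so that $Z$ is compact in $\Dcal^-(C)$ and $j_*$, being right adjoint to the amplitude-bounded Verdier localization $j^!$, has amplitude bounded by $\gldim A$ on bounded objects. A clean way to package this is: over a ring of finite global dimension, $\Dcal^b$ = compact objects = $K^b(\mathrm{Proj})$, the functors of a recollement automatically restrict to compacts (a standard fact, since a recollement of compactly generated triangulated categories whose middle compacts are generated by the images of the outer compacts restricts to the compact subcategories, cf.\ \cite{N}), and the recollement of compact subcategories is exactly $(R0)$, hence also $(R1)$ via $\Dcal^b\simeq K^b(\mathrm{Proj})$. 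I would present the converse along these lines and relegate the amplitude bookkeeping to a sentence citing \cite{Hap2} and finite global dimension.
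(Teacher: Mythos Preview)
Your forward direction has a genuine gap. You claim that a recollement of the form $(R1)$ is ``encoded \ldots\ by a homological ring epimorphism $A\to B$'', invoking Proposition~\ref{chooseBwithhomepi}. But that proposition is stated for unbounded derived categories, and more importantly it only applies to recollements for which $i^\ast(A)$ is an \emph{exceptional} object of the left-hand category. Nothing in the hypotheses of $(R1)$ guarantees this: from the adjunction one gets $\Hom_{\Dcal^b(B)}(i^\ast A, i^\ast A[n])\cong H^n(i_\ast i^\ast A)$, which is bounded but has no reason to vanish for all $n\neq 0$. So you cannot assume $i_\ast$ is restriction along a homological epimorphism, and the rest of your functor-extension argument (which rests on $\lambda_\ast$ being fully faithful on $\Dcal^-$ and on explicit formulas for its adjoints) does not get off the ground.

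The paper takes a different route that avoids this issue entirely. Rather than extending the six functors, it extracts the two objects $X=j_!(C)$ and $Y=i_\ast(B)$ from the $(R1)$-recollement and checks that they satisfy the existence criterion of \cite[Corollary~6]{K}, \cite[Theorem~2]{NS2} for a recollement of the form $(R2)$; the only verification spelled out is that $Y$ has no higher self-extensions with coproducts, which follows from $i_\ast$ being a full embedding. This object-based argument requires no structural assumption on $i_\ast$ beyond what the recollement already provides. For the converse under finite global dimension, the paper simply cites \cite[Theorem~7]{K}; your restriction-to-compacts sketch is in the right spirit but, as you yourself note, the amplitude control on the right adjoints $i^!$ and $j_\ast$ is the delicate point, and you would need to make that precise rather than appeal loosely to \cite{Hap2}.
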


\begin{proof} See \cite[Corollary 6]{K}, \cite[Theorem 2]{NS2}: From the
recollement $(R1)$, the complexes $Y=i_*(B),\,X=j_!(C)\in
K^b(\text{Proj-}A)$ provide the required candidates for the existence of
a recollement of the form $(R2)$. Note that
$\Hom_{\Dcal(A)}(Y,Y[n]^{(I)})\cong \Hom_{\Dcal(B)}(B,B[n]^{(I)})=0$ for all
non-zero integers  $n$ and all sets $I$, because  $i_*$ is a full
embedding. For the converse, see \cite[Theorem 7]{K}.
\end{proof}


\medskip
The same argument as above also proves  the following lifting of
recollements. The second statement follows from \cite[Proposition
4]{K}.

\begin{lem} If the ring $A$ has a recollement of the form $(R0)$, then
it has a recollement of the form $(R2)$. The converse holds true
if $A$ has finite global dimension. \label{r0tor2}
\end{lem}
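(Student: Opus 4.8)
The plan is to mimic the proof of Lemma~\ref{r1tor2} almost verbatim, replacing the bounded derived categories by the homotopy categories of projectives. First I would assume the ring $A$ has a recollement of the form $(R0)$, with the six triangle functors, and set $Y=i_*(B)\in K^b(\text{Proj-}A)$ and $X=j_!(C)\in K^b(\text{Proj-}A)$; these are bounded complexes of finitely generated projective $A$-modules, hence compact objects of $\Dcal(A)$. As in the proof of Lemma~\ref{r1tor2}, full faithfulness of $i_*$ gives $\Hom_{\Dcal(A)}(Y,Y[n]^{(I)})\cong\Hom_{\Dcal(B)}(B,B[n]^{(I)})=0$ for all non-zero $n$ and all sets $I$, while compactness of $X$ and the recollement $(R0)$ give the remaining orthogonality and generation conditions. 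One then invokes Theorem~\ref{single} (equivalently \cite[Theorem~7]{K} in its $(R0)$-version, as stated in \cite[Theorem~2]{NS2}), applied to the pair $(X,Y)$, to produce a recollement of $\Dcal(A)$ by derived categories of rings; restricting this to the compactly generated subcategories one obtains a recollement of the form $(R2)$ of $\Dcal^-(A)$. This is exactly the sentence ``the same argument as above also proves''.

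For the converse, assuming $A$ has finite global dimension and admits a recollement of the form $(R2)$, the point is that $K^b(\text{Proj-}A)$ can be recovered from $\Dcal^-(A)$ as a canonical triangulated subcategory when $\gldim A<\infty$: indeed over a ring of finite global dimension every bounded-above complex is quasi-isomorphic to a bounded complex of projectives, and $K^b(\text{Proj-}A)$ is identified with the subcategory of compact objects (or, depending on conventions, with $\Dcal^b(A)$, which coincides with $K^b(\text{Proj-}A)$ in this case). I would cite \cite[Proposition~4]{K}, which is precisely the statement that for rings of finite global dimension a recollement of $\Dcal^-$ restricts to one of $K^b(\text{Proj-})$; this is what the final sentence of the excerpt already points to. Concretely one checks that the six functors of the $(R2)$-recollement send bounded complexes of projectives to bounded complexes of projectives, using that $B$ and $C$ also have finite global dimension by \cite{Hap2} (the middle term of a recollement of derived module categories has finite global dimension iff the outer terms do), together with the fact that $i_*,j_!$ preserve compactness and the adjoints preserve finiteness of projective dimension.

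The main obstacle, and the only place where real care is needed, is the converse direction: one must verify that the restriction of each of the six functors from $\Dcal^-$ to the subcategory $K^b(\text{Proj-})$ is well-defined, i.e. lands in bounded complexes of projectives and not merely in bounded-above ones. This is where finite global dimension of all three rings $A$, $B$, $C$ is used — it bounds projective dimensions uniformly, so that applying $i^*,i^!,j^*,j_*$ etc.\ to a perfect complex again yields a perfect complex. Since \cite[Proposition~4]{K} already packages this verification, the proof reduces to quoting it after noting that $B$ and $C$ inherit finite global dimension from $A$ via \cite{Hap2}; I would spell out only this inheritance step and the identification $Y=i_*(B)$, $X=j_!(C)$, and otherwise refer to the cited results, exactly in the terse style of Lemma~\ref{r1tor2} and Lemma~\ref{r0tor2}.
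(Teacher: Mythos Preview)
Your approach matches the paper's: for the forward direction you extract $X=j_!(C)$ and $Y=i_*(B)$ from the $(R0)$-recollement and feed them into the existence criterion from \cite{K,NS2}, and for the converse you invoke \cite[Proposition~4]{K}. Two points of imprecision are worth flagging. First, $K^b(\text{Proj-}A)$ is the homotopy category of bounded complexes of \emph{arbitrary} projectives, not finitely generated ones, so your claim that $Y=i_*(B)$ is automatically compact is unjustified; the criterion from \cite[Theorem~1]{K} for $(R2)$-recollements only requires $Y\in K^b(\text{Proj-}A)$ together with the vanishing $\Hom_{\Dcal(A)}(Y,Y[n]^{(I)})=0$ you correctly verify, so the argument still goes through once you drop the compactness claim for $Y$. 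Second, your detour via Theorem~\ref{single} to first produce an $(R3)$-recollement and then ``restrict to compactly generated subcategories'' is both unnecessary and not obviously valid---passing from $(R3)$ to $(R2)$ is precisely the content of Lemma~\ref{r3tor2} and needs an extra hypothesis. The criterion in \cite{K,NS2} yields the $(R2)$-recollement directly, which is what the paper's one-line proof (``the same argument as above'') intends. Your elaboration of the converse, using \cite{Hap2} to transfer finite global dimension to $B$ and $C$, is a reasonable unpacking of what \cite[Proposition~4]{K} contains.
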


\medskip
\begin{lem} If the ring $A$ has a recollement of the form $(R2)$, then
it has a recollement of the form $(R3)$. \label{r2tor3}
\end{lem}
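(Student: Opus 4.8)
The plan is to obtain the recollement of type $(R3)$ from the criterion of Theorem \ref{single}, by reading off the two required exceptional objects of $\Dcal(A)$ directly from the given recollement of type $(R2)$; this runs parallel to the proof of Lemma \ref{r1tor2} and relies on \cite{K,NS2}. Write $i^\ast,i_\ast,i^!,j_!,j^!=j^\ast,j_\ast$ for the six functors of the given recollement of $\Dcal^-(A)$ by $\Dcal^-(B)$ and $\Dcal^-(C)$, and set $X:=j_!(C)$ and $Y:=i_\ast(B)$, where $C$ and $B$ denote the regular modules. Both $X$ and $Y$ are bounded-above complexes over $A$, and I regard them as objects of $\Dcal(A)$ through the fully faithful embedding $\Dcal^-(A)\hookrightarrow\Dcal(A)$; since this embedding is fully faithful and $X,Y$ are bounded above, the $\Hom$-groups between their shifts are computed equally in $\Dcal^-(A)$ and in $\Dcal(A)$.

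The first, easy step is exceptionality and condition (iii). From the recollement identities $j^!j_!\cong\mathrm{id}$, $i^\ast i_\ast\cong\mathrm{id}$ and $j^!i_\ast=j^!i_!=0$ one gets, for every $n\in\Z$, the isomorphisms $\Hom_{\Dcal(A)}(X,X[n])\cong\Hom_{\Dcal(C)}(C,C[n])$ and $\Hom_{\Dcal(A)}(Y,Y[n])\cong\Hom_{\Dcal(B)}(B,B[n])$, both of which vanish for $n\neq 0$, together with $\Hom_{\Dcal(A)}(X[n],Y)\cong\Hom_{\Dcal^-(C)}(C[n],j^!i_\ast B)=0$. Hence $X$ and $Y$ are exceptional and condition (iii) of Theorem \ref{single} holds.

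The remaining conditions are that $X$ is compact (i), that $Y$ is self-compact (ii), and that $X\oplus Y$ generates $\Dcal(A)$ (iv). Here I would use the structure of recollements of bounded-above derived categories: as in Proposition \ref{chooseBwithhomepi}, the given recollement $(R2)$ is the restriction to bounded-above complexes of a recollement of $\Dcal(A)$ attached to a homological ring epimorphism $\lambda\colon A\to B$, with $Y=i_\ast(B)=\lambda_\ast(B)$ and $X=j_!(C)$ a perfect complex over $A$; see \cite{K,NS2}. Perfectness gives (i). For (ii), the unbounded restriction functor $\lambda_\ast\colon\Dcal(B)\to\Dcal(A)$ extends $i_\ast$, is fully faithful, and preserves coproducts, so that $\mathrm{Tria}_{\Dcal(A)}(Y)=\lambda_\ast(\Dcal(B))$ and $\Hom_{\Dcal(A)}(Y,-)$, restricted to $\mathrm{Tria}_{\Dcal(A)}(Y)$, corresponds to $\Hom_{\Dcal(B)}(B,-)$, which preserves coproducts. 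For (iv) it suffices to show $A\in\mathrm{Loc}_{\Dcal(A)}(X\oplus Y)$, since then $\mathrm{Loc}_{\Dcal(A)}(X\oplus Y)\supseteq\mathrm{Loc}_{\Dcal(A)}(A)=\Dcal(A)$; and this follows from the canonical triangle $j_!j^!(A)\to A\to i_\ast i^\ast(A)\leadsto$ of the recollement, whose outer terms lie in $\mathrm{Loc}_{\Dcal(A)}(X)$ respectively $\mathrm{Loc}_{\Dcal(A)}(Y)$ — indeed $i_\ast i^\ast(A)=\lambda_\ast(i^\ast A)$ with $i^\ast A\in\Dcal^-(B)\subseteq\Dcal(B)=\mathrm{Loc}_{\Dcal(B)}(B)$, and symmetrically $j_!j^!(A)$ is built from $X=j_!(C)$ through the coproduct-preserving unbounded extension of $j_!$. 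With (i)--(iv) in hand, Theorem \ref{single} delivers a recollement of $\Dcal(A)$ with outer terms $\Tria X\cong\Dcal(C)$ and $\Dcal(A)/\Tria X\cong\lambda_\ast(\Dcal(B))\cong\Dcal(B)$, that is, of type $(R3)$.

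I expect the main obstacle to be exactly the structural input used in the third paragraph: that a recollement of $\Dcal^-(A)$ by derived module categories is already encoded by a compact object $j_!(C)$ together with a homological ring epimorphism $A\to B$ — the bounded-above counterpart of Proposition \ref{chooseBwithhomepi} and Theorem \ref{single} — and is not simply forced by the recollement axioms alone. This is where \cite{K,NS2} enters. Granting it, the rest is routine: the vanishing $\Hom$-conditions transfer from $\Dcal^-(A)$ to $\Dcal(A)$ because all objects in play are bounded above, and the generating condition reduces, as above, to the recollement triangle for $A$ together with the fact that restriction of scalars along $\lambda$ extends to the unbounded level.
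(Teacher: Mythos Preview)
Your overall strategy matches the paper's exactly: set $X=j_!(C)$ and $Y=i_\ast(B)$, verify the hypotheses of Theorem~\ref{single}, and conclude. The paper's proof is one sentence --- it cites \cite[Theorem~1]{K} and \cite[Theorem~2]{NS2} directly for the facts that $X$ is compact exceptional, $Y$ is self-compact exceptional, and $X\oplus Y$ generates $\Dcal(A)$, then applies Theorem~\ref{single}.

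Where your write-up diverges is the third paragraph, and this detour is both unnecessary and problematic. You invoke Proposition~\ref{chooseBwithhomepi} to assert that the given $(R2)$ recollement arises as the restriction of an unbounded recollement attached to a homological ring epimorphism $\lambda\colon A\to B$. But Proposition~\ref{chooseBwithhomepi} is stated for $\Dcal(A)$, not $\Dcal^-(A)$, and even there it only covers recollements for which $i^\ast(A)$ is exceptional --- an extra hypothesis you do not have. Worse, claiming that the $(R2)$ recollement is the restriction of an $(R3)$ recollement is essentially the conclusion you are trying to reach, so the argument as phrased is circular. The references \cite{K,NS2} do give you what you need, but what they give is precisely the direct statement the paper uses: from an $(R2)$ recollement one reads off that $j_!(C)$ is compact and $i_\ast(B)$ is self-compact, together with generation --- no passage through a homological epimorphism is required. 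Your first and second paragraphs are fine; replace the third with a direct appeal to \cite[Theorem~1]{K} and \cite[Theorem~2]{NS2} for conditions (i), (ii), (iv), and you recover the paper's proof.
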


\begin{proof} Given a recollement of the form $(R2)$, we know from \cite[Theorem 1]{K}, \cite[Theorem 2]{NS2} that
$X=j_!(C)$ is  compact exceptional, $Y=i_*(B)$ is
self-compact and exceptional, and $X\oplus Y$
generates $\Dcal(A)$.
So, by Theorem \ref{single} there
exists a recollement of the form $(R3)$.
\end{proof}


\medskip
\begin{lem}
If the ring $A$ has a recollement of  the form (R3) and  $Y=i_*(B)$ belongs
to $K^b(\mathrm{Proj}\text{-}A)$, then $A$ has a  recollement of the form (R2).

In particular, if a  hereditary artin algebra $A$ has a recollement of  the form (R3), then it has a recollement of the form $(R2)$.
 \label{r3tor2}
\end{lem}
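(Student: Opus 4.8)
The plan is to obtain a recollement of the form (R2) from the characterization of such recollements by a suitable pair of exceptional objects, in the same spirit as the proofs of Lemmas \ref{r1tor2}--\ref{r2tor3}. Given a recollement of $A$ of the form (R3), I would first read off the objects $X := j_!(C)$ and $Y := i_*(B)$. As recorded in Theorem \ref{single}, $X$ is compact and exceptional, $Y$ is self-compact and exceptional, $\Hom_{\Dcal(A)}(X[n],Y)=0$ for all $n\in\Z$, and $X\oplus Y$ generates $\Dcal(A)$ --- the last two facts being in any case immediate from $j^!\circ i_!=0$ and from the two canonical triangles of the recollement. Since $X$ is compact, it is isomorphic in $\Dcal(A)$ to a bounded complex of finitely generated projective modules, and hence lies in $K^b(\mathrm{Proj}\text{-}A)$; and $Y$ lies there by hypothesis. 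These are precisely the data required by \cite[Corollary~6]{K} and \cite[Theorem~2]{NS2} for the existence of a recollement of the form (R2).

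For the final statement, the idea is not to work with the given recollement --- whose left-hand ring $B$ need be neither hereditary nor the target of a homological ring epimorphism from $A$ --- but to replace it by a better one. If $A$ is a hereditary artin algebra carrying a recollement of the form (R3), then $X = j_!(C)$ is exceptional and compact, so Corollary \ref{HRSextended-more} provides a hereditary artin algebra $B$, a homological ring epimorphism $\lambda\colon A\to B$, and a recollement of $\Dcal(A)$ by $\Dcal(B)$ and $\Dcal(C)$ in which, by Proposition \ref{chooseBwithhomepi}, the embedding $i_*$ is restriction of scalars along $\lambda$. Then $i_*(B)={}_AB$ is an $A$-module, and since $A$ is hereditary every $A$-module has projective dimension at most one; in particular $i_*(B)\in K^b(\mathrm{Proj}\text{-}A)$. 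Applying the first part of the lemma to this recollement now yields a recollement of the form (R2).

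I do not expect any genuine difficulty beyond keeping track of which recollement is in play. The whole content is that the only possible obstruction to lifting a recollement of the form (R3) to one of the form (R2) is whether $i_*(B)\in K^b(\mathrm{Proj}\text{-}A)$, the object $X=j_!(C)$ being automatically there by compactness; and in the hereditary artin case this obstruction vanishes as soon as one passes, via Corollary \ref{HRSextended-more}, to a recollement whose left-hand term is an honest module over $A$ --- every module over a hereditary ring having projective dimension at most one.
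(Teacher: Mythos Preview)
Your proof is correct and follows essentially the same approach as the paper: for the first part you extract the pair $(X,Y)=(j_!(C),i_\ast(B))$ from the (R3) recollement and verify the criterion of \cite{K}, \cite{NS2} for an (R2) recollement, using compactness of $X$ and the hypothesis on $Y$ to place both in $K^b(\mathrm{Proj}\text{-}A)$; for the hereditary artin case you replace the given recollement via Corollary \ref{HRSextended-more} by one induced from a homological epimorphism $A\to B$, so that $i_\ast(B)$ is a module and hence has finite projective resolution. This is exactly what the paper does, with only cosmetic differences in which results are cited.
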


\begin{proof} Given a recollement of the form $(R3)$, we know from Theorem \ref{single} that $X=j_!(C)$ is
a compact exceptional object, and $Y=i_*(B)$ is self-compact. Then the statement follows from the criterion in
\cite[Theorem 1]{K}, \cite[Theorem 2]{NS2}, since we have as in the proof of Lemma \ref{r1tor2} that $\Hom_{\Dcal(A)}(Y,Y[n]^{(I)})=0$ for all non-zero integers $n$ and all sets $I$.

If $A$ is a hereditary artin algebra, we can assume by  Corollary \ref{HRSextended-more}  that the recollement of the form $(R3)$ is induced by a  homological ring epimorphism $\lambda:A\to B$ to a hereditary artin algebra $B$, and $i_*$ is the canonical embedding  $\Dcal(B)\to \Dcal(A)$. So  $Y=i_*(B)=B$ is an $A$-module and thus  belongs to
$K^b(\text{Proj-}A)$.
\end{proof}


\begin{cor} \label{hereditaryArtin}
 For a hereditary artin algebra $A$, the following assertions are equivalent:
\begin{enumerate}
 \item $A$ has a recollement of the form $(R0)$;
\item $A$ has a recollement of the form $(R1)$;
\item $A$ has a recollement of the form $(R2)$;
\item $A$ has a recollement of the form $(R3)$.
\end{enumerate}
\end{cor}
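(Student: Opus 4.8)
The plan is to prove the equivalence of $(R0)$--$(R3)$ for a hereditary artin algebra $A$ by assembling the chain of implications already established in the preceding lemmas and then closing the loop with the hereditariness hypothesis. Concretely, I would observe that Lemmas \ref{r0tor2} and \ref{r1tor2} give $(R0)\Rightarrow(R2)$ and $(R1)\Rightarrow(R2)$, while Lemma \ref{r2tor3} gives $(R2)\Rightarrow(R3)$. The crucial new ingredient is the second statement of Lemma \ref{r3tor2}, which provides $(R3)\Rightarrow(R2)$ precisely because $A$ is hereditary artin: by Corollary \ref{HRSextended-more} any recollement of type $(R3)$ can be taken to come from a homological epimorphism $A\to B$ with $B$ again hereditary artin, so $Y=i_*(B)=B$ lies in $K^b(\mathrm{Proj}\text{-}A)$. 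Thus $(R2)$ and $(R3)$ are equivalent. It then remains to deduce $(R2)\Rightarrow(R0)$ and $(R2)\Rightarrow(R1)$, which follow from the converse directions of Lemmas \ref{r0tor2} and \ref{r1tor2}, since a hereditary artin algebra has finite global dimension (indeed global dimension at most one).

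So the proof is essentially a diagram chase: $(R0)\Rightarrow(R2)$, $(R1)\Rightarrow(R2)$, $(R2)\Leftrightarrow(R3)$ by Lemmas \ref{r2tor3} and \ref{r3tor2}, and then the converses $(R2)\Rightarrow(R0)$, $(R2)\Rightarrow(R1)$ using finite global dimension. Combining, all four conditions are equivalent. I would write this out as: "(1)$\Rightarrow$(3): Lemma \ref{r0tor2}. (2)$\Rightarrow$(3): Lemma \ref{r1tor2}. (3)$\Rightarrow$(4): Lemma \ref{r2tor3}. (4)$\Rightarrow$(3): Lemma \ref{r3tor2}. (3)$\Rightarrow$(1): Lemma \ref{r0tor2}, since $A$ has finite global dimension. (3)$\Rightarrow$(2): Lemma \ref{r1tor2}, since $A$ has finite global dimension." This is short enough to state in a couple of sentences.

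The only point that requires a moment's care — and the main (mild) obstacle — is justifying why finite global dimension applies and why the converses in Lemmas \ref{r0tor2} and \ref{r1tor2} are legitimately invoked. A hereditary artin algebra has global dimension at most $1$, hence finite, so the hypotheses of those converse statements are met; I would simply note this explicitly. Everything else is a pure citation of the lemmas just proved, so no genuinely new argument is needed. The real content of the corollary has already been discharged in Lemma \ref{r3tor2}, where the hereditariness was used through Corollary \ref{HRSextended-more}; the corollary itself is just the clean packaging of that work together with the finite-global-dimension converses.

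\begin{proof}
The implications $(1)\Rightarrow(3)$ and $(2)\Rightarrow(3)$ are Lemmas \ref{r0tor2} and \ref{r1tor2}, respectively. The implication $(3)\Rightarrow(4)$ is Lemma \ref{r2tor3}, and $(4)\Rightarrow(3)$ is the second statement of Lemma \ref{r3tor2}. Finally, a hereditary artin algebra has global dimension at most one, hence finite global dimension, so the converse statements in Lemmas \ref{r0tor2} and \ref{r1tor2} apply and yield $(3)\Rightarrow(1)$ and $(3)\Rightarrow(2)$. Thus all four assertions are equivalent.
\end{proof}
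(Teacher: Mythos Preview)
Your proof is correct and takes essentially the same approach as the paper, which simply says ``Combine Lemmas 4.1 -- 4.4.'' You have merely spelled out explicitly which lemma provides which implication and noted the finite global dimension hypothesis needed for the converses, which is exactly what the paper's one-line proof intends.
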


\begin{proof}
 Combine Lemmas 4.1 -- 4.4.
\end{proof}

\medskip
\begin{ex}\label{r2NOTtor1} The following example from \cite[Example 8]{K}
provides a recollement of the form $(R2)$ which does not restrict
to $\Dcal^b$-level. Indeed, by \cite[Proposition 4]{K}, a recollement
of the form $(R2)$ restricts to a recollement of the form $(R1)$
if and only if the functor $j_!:\Dcal^-(C)\ra \Dcal^-(A)$
restricts to $\Dcal^b$-level - a condition that fails here. 

\medskip
{\rm Let $A$ be the finite dimensional algebra over a field $k$
given by
\[
\begin{picture}(100,10)
\put(-20,2){$\cdot$} \put(-26,2){\footnotesize $1$}
\put(22,2){$\cdot$} \put(28,2){\footnotesize $2$}
\put(-14,7){\vector(1,0){33}} \put(0,8){\footnotesize $\alpha$}
\put(18,1){\vector(-1,0){33}} \put(0,-7){\footnotesize $\beta$}
\put(60,2){$[\beta\alpha\beta=0].$}
\end{picture}
\]
The simple module $S(1)$ and the projective module $P(2)$ provide
a recollement of $A$ of the form $(R2)$ with $B=\End_A(S(1))\cong k$
and $C=\End_A(P(2))\cong k[x]/(x^2)$. By construction, the functor
$j_!$ is given by the left derived functor $-\lten_C P(2)$. It
cannot restrict to $\Dcal^b$-level, for $P(2)$ as a left $C$-module
has infinite projective dimension.}
\end{ex}


\bigskip

For the rest of the section, we focus on
rings that do not admit non-trivial recollements as above. The following definition slightly extends a
definition of Wiedemann \cite{W}, who considered bounded derived
categories only.

\begin{defn}
A ring $R$ is called {\em derived simple} if $\Dcal(R)$ does not
admit any non-trivial recollement whose factors are derived
categories of rings.

A ring $R$ is called {\em derived simple} with respect to
$\Dcal^{\ast}$ if $\Dcal^{\ast}(R)$ for $\ast = \{b, +, - \}$ does not
admit any non-trivial recollement whose factors are derived
categories (of the form $\Dcal^{\ast}$) of rings.
\end{defn}

In Section \ref{sectionex} we will see why it is necessary to
require the factors to be derived categories of rings again, and
not just triangulated categories. Observe that  $\Dcal^-$-derived
simplicity implies $\Dcal^b$-derived simplicity by Lemma \ref{r1tor2},
and $\Dcal$-derived simplicity implies $\Dcal^-$-derived simplicity by
Lemma \ref{r2tor3}. As we will see in \cite{AKLY2}, the converse does not hold for general rings. However
the situation ist fine for hereditary artin algebras.

\smallskip

\begin{cor}
For a  hereditary artin algebra $A$, the following assertions are equivalent:
\begin{enumerate}
 \item $A$  is derived simple.
\item $A$ is $\Dcal^-$-derived simple. 
\item $A$ is $\Dcal^b$-derived simple.
\end{enumerate}
\end{cor}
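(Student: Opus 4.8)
The plan is to reduce all three notions of derived simplicity to non-existence statements about recollements of the explicit shapes $(R1)$, $(R2)$, $(R3)$ from the list above, and then to quote Corollary~\ref{hereditaryArtin}.

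\emph{Step 1 (dictionary).} Unravelling the definition, $A$ fails to be derived simple precisely when $\Dcal(A)$ admits a non-trivial recollement both of whose outer terms are derived module categories of rings, i.e.\ precisely when $A$ admits a non-trivial recollement of the form $(R3)$. Likewise $A$ fails to be $\Dcal^-$-derived simple precisely when it admits a non-trivial recollement of the form $(R2)$, and $A$ fails to be $\Dcal^b$-derived simple precisely when it admits one of the form $(R1)$. The only thing to check here is that the two readings match: a recollement of $\Dcal^\ast(A)$ by derived categories (of the same type) of rings \emph{is}, by definition, one of the listed shapes, and conversely every recollement in the list has ring outer terms.

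\emph{Step 2 (quote Corollary~\ref{hereditaryArtin}).} For a hereditary artin algebra the existence of a non-trivial recollement of the form $(R1)$, of the form $(R2)$, and of the form $(R3)$ are all equivalent. Combining this with Step~1 and taking contrapositives gives the equivalence of $\Dcal^b$-, $\Dcal^-$- and $\Dcal$-derived simplicity, which is the statement. Two of the three implications are in fact immediate and were already noted just before the statement ($\Dcal$-simple $\Rightarrow$ $\Dcal^-$-simple by Lemma~\ref{r2tor3}, and $\Dcal^-$-simple $\Rightarrow$ $\Dcal^b$-simple by Lemma~\ref{r1tor2}); the content lies in the reverse directions, which are exactly the non-formal parts of Corollary~\ref{hereditaryArtin}.

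\emph{Where the work is.} Corollary~\ref{hereditaryArtin} is assembled from Lemmas~\ref{r1tor2}--\ref{r3tor2}, and the only implication using hereditariness essentially is the descent from a recollement of the form $(R3)$ to one of the form $(R2)$, i.e.\ Lemma~\ref{r3tor2}. An arbitrary $(R3)$ recollement need not satisfy $i_\ast(B)\in K^b(\mathrm{Proj}\text{-}A)$, so one first replaces it, using Corollary~\ref{HRSextended-more} applied to the compact exceptional object $X=j_!(C)$, by a recollement induced by a homological ring epimorphism $A\to B$ onto a hereditary artin algebra, whose left-hand embedding is the canonical one; then $i_\ast(B)=B$ is a finitely generated $A$-module, hence of projective dimension at most one since $A$ is hereditary, so it lies in $K^b(\mathrm{Proj}\text{-}A)$, and the criterion of \cite[Theorem~1]{K},\,\cite[Theorem~2]{NS2} produces the desired $(R2)$ recollement. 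Finally one checks that non-triviality is preserved under each of these reductions: every construction keeps both outer rings nonzero, so a non-trivial recollement of one type yields a non-trivial recollement of the others. I expect this last piece of bookkeeping, rather than any conceptual difficulty, to be the only mildly delicate point.
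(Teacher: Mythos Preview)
Your proposal is correct and follows exactly the paper's approach: the paper's proof is the single line ``It follows from Corollary~\ref{hereditaryArtin}'', and you have simply unpacked this by making the dictionary between derived simplicity and non-existence of $(R1)$/$(R2)$/$(R3)$ recollements explicit, then tracing through the ingredients of that corollary. Your remark on non-triviality preservation is a fair piece of bookkeeping that the paper leaves implicit, but it causes no difficulty since the constructions in Lemmas~\ref{r1tor2}--\ref{r3tor2} and Corollary~\ref{HRSextended-more} visibly keep both outer terms nonzero.
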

\begin{proof} It follows from Corollary \ref{hereditaryArtin}. 
\end{proof}

\medskip

Derived simple rings obviously satisfy a Jordan H\"older theorem
for derived categories. Computations or proofs in K-theory or
about homological dimensions that are based on recollements, that
is on an induction on the length of a stratification, need to be
based on the case of derived simple rings. Hence it is of interest
to identify such rings or classes thereof.

\medskip
\begin{lem}\label{tracecriterion}
A semiperfect ring $R$ is derived simple provided that for each
finitely generated projective $R$-module $P$ the trace of $P$ in
$R$ equals $R$.
\end{lem}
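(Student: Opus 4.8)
The statement to prove is: a semiperfect ring $R$ is derived simple provided that for each finitely generated projective right $R$-module $P$, the trace $\tau_P(R)$ equals $R$. The natural strategy is to argue by contradiction: suppose $\Dcal(R)$ admits a non-trivial recollement whose outer terms are derived module categories. By Theorem \ref{single}, such a recollement is equivalent to the existence of a compact exceptional object $X=j_!(C) \in \Dcal(R)$ together with a self-compact exceptional object $Y=i_*(B)$ satisfying $\Hom_{\Dcal}(X[n],Y)=0$ for all $n$ and such that $X\oplus Y$ generates $\Dcal(R)$. Non-triviality of the recollement forces both $X$ and $Y$ to be non-zero. Since $R$ is semiperfect, I want to reduce the compact exceptional object $X$ to something governed by idempotents of $R$, and then show that the trace condition forces a contradiction.

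First I would use that $X$ is a compact object of $\Dcal(R)$, hence quasi-isomorphic to a bounded complex of finitely generated projective modules. A key point is to locate, inside the thick subcategory generated by $X$, an indecomposable projective summand of $R$ — this is where semiperfectness enters, via the Krull--Schmidt decomposition $R = \bigoplus e_iR$ into indecomposable projectives and the fact that compact objects of $\Dcal(R)$ correspond (up to summands and shifts) to the bounded homotopy category $K^b(\mathrm{Proj}\text{-}R)$. Concretely, the smallest term of the complex $X$, or a well-chosen homology/cohomology group, should reveal some $e_iR$ as a direct summand of a term of $X$. The plan is then to show that this $e_iR$ (equivalently its class) lies in $\Tria X$, and hence that the simple top $S_i = e_iR/e_i\mathrm{Rad}(R)$ is orthogonal to $Y$ in the appropriate sense; combined with the generation hypothesis, one deduces $Y$ cannot see $S_i$, forcing $S_i$ to be detected entirely by $X$.

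Having localized attention to one indecomposable projective $P=e_iR$ occurring in $X$, I would exploit the orthogonality $\Hom_{\Dcal}(X[n],Y)=0$ to conclude that $Y$ is a complex of modules over the quotient ring obtained by killing the trace ideal $\tau_P(R)$ — but the hypothesis says $\tau_P(R)=R$, so this quotient is zero, forcing $Y=0$ and contradicting non-triviality of the recollement. More precisely, the perpendicular-category description in Theorem \ref{HRSextended} and Proposition \ref{artin}(1) tells us that when $P$ is projective the ring $B$ on the other side is $R/\tau_P(R)$, whose modules are exactly those annihilated by the relevant idempotent; if $\tau_P(R)=R$ then $R/\tau_P(R)=0$ and there is no room for a non-zero $Y$. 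The remaining bookkeeping is to check that, after reducing via the exceptional object $X$, \emph{some} indecomposable projective summand of $R$ genuinely appears — a non-zero compact object cannot avoid all of them — so that the trace hypothesis can be invoked.

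\textbf{Main obstacle.} The delicate step is the passage from an arbitrary compact exceptional complex $X$ to the assertion that it "contains" an indecomposable projective $e_iR$ in a way that interacts correctly with the orthogonality to $Y$: a compact object is only a complex of projectives, not a projective module, and its class in $\Tria X$ need not contain any $e_iR$ a priori. Handling this requires using semiperfectness carefully — via the identification of compact objects with $K^b(\mathrm{Proj}\text{-}R)$ and a minimality argument on the complex representing $X$ — and then transferring the trace condition (a statement about modules) to the derived-category orthogonality. I expect this reduction, rather than the final contradiction, to carry the real weight of the proof.
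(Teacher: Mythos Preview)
Your proposal has a genuine gap: you correctly identify the ``main obstacle'' --- passing from an arbitrary compact exceptional complex $X$ to a single indecomposable projective $e_iR$ in a way that interacts with the orthogonality to $Y$ --- but you do not overcome it. Knowing that some $e_iR$ occurs as a summand of a \emph{term} of a minimal complex for $X$ does not give $e_iR\in\Tria X$, nor does $\Hom_{\Dcal}(X[n],Y)=0$ for all $n$ imply $\Hom_{\Dcal}(e_iR,Y[n])=0$; the inclusion of perpendiculars goes the wrong way. Moreover, the tools you plan to invoke at the end (Theorem~\ref{HRSextended} and Proposition~\ref{artin}(1)) require $R$ to be semihereditary or an artin algebra, neither of which is assumed here, so the identification of the left-hand side with $R/\tau_P(R)$ is not available to you in this generality.

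The paper takes a shorter and quite different route, based on an observation you are missing: the trace hypothesis is extremely strong. From $\tau_P(R)=R$ one gets a split surjection $P^n\twoheadrightarrow R$, so $R\in\add P$ and hence $\add P=\add R$ for \emph{every} non-zero finitely generated projective $P$. Over a semiperfect ring (Krull--Schmidt for projectives) this forces all indecomposable projectives to be isomorphic. Consequently, between any two non-zero finitely generated projectives there is a map which is an isomorphism on an indecomposable summand. Representing a compact object $X$ by a \emph{minimal} complex (differentials land in the radical), such a split map from the bottom term $X^m$ to the top term $X^n$ defines a chain map $X\to X[n-m]$ that cannot be null-homotopic (any homotopy would factor through radical maps). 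Hence a compact exceptional $X$ is concentrated in one degree, i.e.\ a shift of a finitely generated projective; since $\add P=\add R$, it generates $\Dcal(R)$ and is a tilting complex. Theorem~\ref{single} then gives derived simplicity immediately. This bypasses entirely the obstacle you flagged.
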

\begin{proof}
The condition means that $\add R=\add P$ for any finitely generated
projective $R$-module $P$. So, for any two non-zero finitely
generated projective $R$-modules $P,Q$ there is a  non-zero
homomorphism $P\to Q$ which  maps an indecomposable summand of $P$
isomorphically onto an indecomposable summand of $Q$,  and is zero
elsewhere. Hence a compact complex (of finitely generated
projective $R$-modules) must have self-extensions unless it is
just a projective module, up to shift. Then all compact
exceptional complexes generate $\Dcal(R)$ and are  tilting
complexes. By Theorem \ref{single} it follows that $R$ is derived
simple.
\end{proof}

\medskip
\begin{prop}\label{herders}
Let $R$ be a semihereditary ring. Then the following assertions
are equivalent.
\begin{enumerate}
\item
 $R$ is derived simple.

\item
 Every non-zero finitely presented exceptional  module is tilting.

\item
 The universal
localization $\lambda: R \rightarrow R_T$ at any non-zero finitely
presented exceptional $R$-module $T$ vanishes.

\end{enumerate}
If $R$ satisfies these conditions, then for each finitely
generated projective $R$-module $P$ the trace of $P$ in $R$ equals
$R$.
\end{prop}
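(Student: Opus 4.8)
The plan is to prove the equivalence of (1), (2) and (3) cyclically, and then to derive the trace condition as a consequence. The semihereditary hypothesis is what allows Theorem \ref{HRSextended} and Proposition \ref{artin} to apply to \emph{every} finitely presented exceptional module, since over a semihereditary ring each finitely presented module has projective dimension at most one; this is the key structural input and should be invoked at the outset.

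For (1)$\Rightarrow$(2), suppose $R$ is derived simple and let $T$ be a non-zero finitely presented exceptional $R$-module. By Theorem \ref{HRSextended} there is a ring $B$ (the universal localization of $R$ at $T$) and a recollement of $\Dcal(R)$ by $\Dcal(B)$ and $\Dcal(C)$ with $C=\End_R(T)$. Derived simplicity forces this recollement to be trivial, so one of the two outer terms must vanish; since $T\neq 0$ we have $C\neq 0$, hence $\Dcal(C)\neq 0$, so the triviality must come from $\Dcal(B)=0$, i.e.\ $B=0$. But then, tracing through the description of $B$ in the proof of Theorem \ref{HRSextended}, $B=0$ means the $\widehat{T}$-reflection of $R$ is zero, equivalently $\widehat{T}=0$, which together with $\projdim T\le 1$ and exceptionality is exactly the statement that $T$ is a tilting module (the exact sequence $0\to R\to T_0\to T_1\to 0$ with $T_i\in\Add T$ comes from the Bongartz-type sequence $E$ once $B\cong M/\tau_T(M)=0$, since then $M$ itself is a direct sum of copies of $T$ up to the trace). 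This step, identifying $B=0$ with $T$ being tilting, is where the earlier explicit description of $B$ as Bongartz complement (items (1)--(4) in the proof of Theorem \ref{HRSextended}) gets used, and it is the part that needs the most care.

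For (2)$\Leftrightarrow$(3): the universal localization $\lambda\colon R\to R_T$ vanishes precisely when $R_T=0$; by Theorem \ref{HRSextended}(3) we may take $B=R_T$, so $R_T=0$ iff $B=0$ iff $\widehat T=0$ iff $T$ is tilting, by the same reasoning as above. So (2) and (3) are two formulations of the same condition. For (3)$\Rightarrow$(1) (equivalently (2)$\Rightarrow$(1)): suppose every non-zero finitely presented exceptional module is tilting, and suppose $\Dcal(R)$ has a recollement by $\Dcal(B)$ and $\Dcal(C)$. By Theorem \ref{single} this yields a compact exceptional object $X=j_!(C)$; it suffices to show $X$ is a tilting complex, i.e.\ $\Tria X=\Dcal(R)$, for then the recollement is trivial on one side. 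Over a semihereditary ring a compact complex is quasi-isomorphic to a bounded complex of finitely generated projectives, and the cohomology of an exceptional such complex, being finitely presented, decomposes into exceptional modules concentrated in various degrees; a degree argument (modules have no negative self-extensions) shows such a complex with non-trivial cohomology in more than one degree cannot be exceptional unless it is, up to shift, a single finitely presented exceptional module $T$. By hypothesis $T$ is tilting, hence a tilting complex, hence $\Tria X=\Dcal(R)$ and the recollement is trivial.

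Finally, assume (1)--(3) hold and let $P$ be a finitely generated projective $R$-module; we want $\tau_P(R)=R$. The module $P$ is finitely presented, projective (hence of projective dimension $\le 1$ trivially) and exceptional, so by (2) it is a tilting module. Applying Theorem \ref{HRSextended} in the projective case, $B$ is the $\widehat P$-reflection of $R$, which by the remark in that proof equals $R/\tau_P(R)$; and $B=0$ by (3). Hence $\tau_P(R)=R$, as desired. The main obstacle throughout is keeping straight the dictionary between ``$B=0$'', ``$R_T=0$'', ``$\widehat T=0$'' and ``$T$ is tilting'', all of which are available from Theorem \ref{HRSextended} and its proof but need to be assembled carefully.
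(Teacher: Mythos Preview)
Your overall strategy matches the paper's closely: use Theorem \ref{HRSextended} to pass between recollements and universal localizations, and Theorem \ref{single} to reduce a putative recollement to a compact exceptional object. The implications (1)$\Rightarrow$(2), (2)$\Leftrightarrow$(3), and the trace statement are all handled essentially as in the paper (your route through the Bongartz complement for (1)$\Rightarrow$(2) is more laborious than necessary---the paper simply observes that $\Dcal(B)=0$ forces $\Tria T=\Dcal(R)$, so $T$ is a tilting module---but it is not wrong).

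There is, however, a genuine gap in your (3)$\Rightarrow$(1). You assert that ``a degree argument (modules have no negative self-extensions) shows such a complex with non-trivial cohomology in more than one degree cannot be exceptional unless it is, up to shift, a single finitely presented exceptional module''. This is false. Over a semihereditary ring a compact object does split as a direct sum $\bigoplus_i M_i[k_i]$ of shifts of finitely presented modules (since all such modules have projective dimension $\le 1$, so the relevant $\Ext^2$ groups vanish and the complex is formal), and each summand of an exceptional object is exceptional; but nothing forces the sum to have only one term. For instance, if $M,N$ are finitely presented exceptional modules with $\Hom(M,N)=\Hom(N,M)=\Ext^1(N,M)=0$, then $M\oplus N[1]$ is compact and exceptional with cohomology in two degrees. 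The vanishing of negative self-extensions does not rule this out.

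The fix is exactly what the paper does: having decomposed $X$ as above, do \emph{not} try to show $X$ is concentrated in one degree. Instead pick any one nonzero module summand $T$; by hypothesis (3) the universal localization at $T$ vanishes, so by Theorem \ref{HRSextended} one has $\Tria T=\Dcal(R)$. Since $\Tria T\subseteq\Tria X$, this forces $\Tria X=\Dcal(R)$, and the given recollement is trivial. Replace your degree argument with this containment argument and the proof goes through.
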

\begin{proof}
(1)$\Rightarrow$(2): By Theorem \ref{HRSextended}, every non-zero
finitely presented exceptional  module $X$ gives rise to a
recollement  \[
\xy (-45,0)*{\Dcal(B)}; {\ar (-25,2)*{}; (-35,2)*{}};
{\ar (-35,0)*{}; (-25,0)*{}^{}}; {\ar(-25,-2)*{}; (-35,-2)*{}};
(-15,0)*{\Dcal(A)}; {\ar (5,2)*{}; (-5,2)*{}_{}};
{\ar (-5,0)*{}; (5,0)*{}}; {\ar (5,-2)*{}; (-5,-2)*{}};
(15,0)*{\Dcal(C)};
\endxy
\]

\vspace{0.2cm}
where  $C=\End_R(X)$. So $\Dcal(C)\cong \Tria X\not=0$, hence
$\Dcal(B)=0$. But this means that $X$ generates $\Dcal(R)$ and is
thus a tilting module.

(2)$\Rightarrow$(3) follows immediately from Theorem
\ref{HRSextended}.

(3) $\Rightarrow$(1): Given a recollement \[
\xy (-45,0)*{\Dcal(B)}; {\ar (-25,2)*{}; (-35,2)*{}};
{\ar (-35,0)*{}; (-25,0)*{}^{}}; {\ar(-25,-2)*{}; (-35,-2)*{}};
(-15,0)*{\Dcal(A)}; {\ar (5,2)*{}; (-5,2)*{}_{j_!}};
{\ar (-5,0)*{}; (5,0)*{}}; {\ar (5,-2)*{}; (-5,-2)*{}};
(15,0)*{\Dcal(C)};
\endxy
\]

\vspace{0.2cm}
with $\Dcal(C)\not=0$, we know from Theorem \ref{single} that $X=
j_!(C)$ is a non-zero compact exceptional object, hence a direct
sum of shifts of
 finitely presented,
exceptional modules. Let $T$ be  one of these modules. By Theorem
\ref{HRSextended}, there is a recollement of $\Dcal(R)$ by $\Tria
T$ and $\Dcal(R_T)$ which must be trivial by  condition (3). Since
$\Tria X$ contains  $\Tria T$,  it follows that the recollement
above must  also be trivial.

\smallskip

Finally, the additional statement is condition (3) in the special
case when $T$ is projective, cf. the first case in Theorem
\ref{HRSextended}.
\end{proof}

\medskip
Now we obtain several examples.

\begin{prop}\label{exdersimple}
(1) All local rings
are derived simple.

(2) A right artinian hereditary ring is derived simple if and only
if it is simple artinian.

(3) A commutative semihereditary ring is derived simple if and
only if it is a Pr\"ufer domain.  In particular, $\Z$ and
polynomial rings in one variable  over fields are derived simple.

(4) A von Neumann regular ring R is derived simple if and only if
for each finitely generated projective module $P$, the trace of $P$ in $R$ equals $R$.
\end{prop}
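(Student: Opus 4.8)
The plan is to derive all four statements from Proposition~\ref{herders} together with the structure theory collected in Section~\ref{sectionprel}; indeed, (1)--(4) are essentially a catalogue of classes where one can check condition (2) or (3) of that proposition, or its additional trace conclusion. So throughout I would keep in mind the dictionary: $R$ semihereditary is derived simple $\iff$ every non-zero finitely presented exceptional module is tilting $\iff$ every universal localization at such a module vanishes.

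For (1), a local ring $R$ is in particular semiperfect, so I would apply Lemma~\ref{tracecriterion}: over a local ring the only finitely generated projective modules are the free ones $R^{(n)}$, whose trace in $R$ is obviously $R$ (for $n\ge 1$; the zero module is excluded), hence $R$ is derived simple. This does not need semiheredity. For (2), if $R$ is right artinian and hereditary, it is in particular semihereditary, so Proposition~\ref{herders} applies; if $R$ is not simple artinian then it has a non-trivial idempotent $e$ with $eR$ a proper projective direct summand, and $eR$ is then a non-zero finitely presented exceptional (projective) module which is not a progenerator, hence not tilting, contradicting derived simplicity --- here I would lean on Proposition~\ref{artin}(1), where the algebra $B=R/ReR$ is non-zero. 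Conversely a simple artinian ring is Morita equivalent to a skew-field, which is local, so derived simple by (1) (derived simplicity being a Morita, indeed derived, invariant).

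For (3), a commutative semihereditary ring $R$ is derived simple $\iff$ (by the additional statement of Proposition~\ref{herders}, combined with (2)$\Leftrightarrow$(1) there) the trace of every finitely generated projective module equals $R$; I would argue that a commutative semihereditary ring has this property precisely when it has no non-trivial idempotents, i.e.\ is connected, and a connected commutative semihereditary ring is exactly a Pr\"ufer domain (a reduced connected ring in which every finitely generated ideal is projective, hence invertible in the total ring of fractions, forces a domain). Conversely, over a Pr\"ufer domain every finitely generated torsion-free module is projective and every non-zero finitely generated ideal has trace $R$, and one checks directly using the exact sequences defining universal localizations that the localization at any finitely presented exceptional module is zero --- the key point being that over a domain a rank-one projective is faithful. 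The instances $\Z$ and $k[x]$ are Pr\"ufer domains (they are PIDs), so they are derived simple. Finally (4) is immediate: a von Neumann regular ring is semihereditary, so it is covered verbatim by the last sentence of Proposition~\ref{herders} in one direction, and by Lemma~\ref{tracecriterion} (von Neumann regular rings are not semiperfect in general, so here I would instead invoke the equivalence (1)$\Leftrightarrow$(3) of Proposition~\ref{herders} specialized to projective $T$, noting that over a von Neumann regular ring every finitely presented module is projective, hence the only finitely presented exceptional modules to test are the projective ones) in the other.

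The main obstacle I expect is in (3): the routine part is recognizing ``connected + commutative semihereditary $\Rightarrow$ domain'', but one must be careful that semiheredity, not full heredity, is assumed, so the argument that trace $=R$ for all finitely generated projectives is equivalent to connectedness needs the idempotent-splitting of finitely generated projective modules over a commutative ring, plus the fact that a finitely generated ideal of a domain that is projective is invertible and hence has trace the whole ring. Everything else reduces to bookkeeping with the results already proved, so I would present (1), (2), (4) quickly and spend the bulk of the argument on (3).
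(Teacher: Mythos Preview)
Your approach matches the paper's in structure and in most details: (1), (2), and (4) are handled just as the paper does, via Lemma~\ref{tracecriterion} and Proposition~\ref{herders}. The forward direction of (3) is also essentially the paper's argument --- the paper uses the decomposition $R = \tau_P(R) \oplus \mathrm{ann}_R(P)$ for finitely generated projectives over commutative rings (from \cite[2.44]{Lam}) to show directly that a derived simple commutative semihereditary ring has no zero-divisors; your route via ``trace condition $\Rightarrow$ connected $\Rightarrow$ domain'' is the same content, just with an intermediate stop. (Do note that your claimed biconditional ``derived simple $\Leftrightarrow$ trace condition'' is not given by Proposition~\ref{herders} alone: the additional statement there only yields the forward implication. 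Since you argue the converse separately anyway, this looks like a slip in phrasing rather than a real error.)

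The genuine gap is in the converse of (3). To show that a Pr\"ufer domain $R$ is derived simple via condition~(3) of Proposition~\ref{herders}, you must verify that the universal localization at \emph{every} non-zero finitely presented exceptional module $T$ vanishes, not just at the projective ones. Your argument (``rank-one projectives are faithful'', ``every non-zero finitely generated ideal has trace $R$'') only covers the projective case, and the phrase ``one checks directly using the exact sequences'' does not explain why a finitely presented exceptional $T$ of projective dimension one cannot occur. In fact it cannot --- over a Pr\"ufer domain every non-zero finitely presented exceptional module is projective --- but this requires a separate argument, which the paper imports from \cite[2.2]{CM}. Without this step the argument is incomplete: for a hypothetical torsion module such as $R/I$ with $I$ a non-zero finitely generated ideal, the universal localization is a non-trivial ring of fractions; the point is precisely that such modules are never exceptional, and you need to say why.
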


\begin{proof}
 (1) follows immediately from Lemma \ref{tracecriterion}.

 (2) Simple artinian rings satisfy the criterion in  Lemma \ref{tracecriterion} and are therefore derived simple. Conversely,
 if a right artinian hereditary ring is derived simple, then we know from Proposition \ref{herders} that all indecomposable finitely generated projective modules are tilting, which shows that there is just one projective module up to isomorphism, which must of course be simple. Then $R$ is simple artinian.

 (3)
As shown in \cite[2.44]{Lam}, if  $P$ is a finitely generated
projective module over a commutative ring $R$, then
$R=\tau_P(R)\oplus {\rm ann}_R(P)$, where ${\rm ann}_R(P)=\{r\in
R\,\mid\,xr=0 \,\text{for all}\,x\in P\}$. So,  every commutative
semihereditary ring $R$ which is derived simple must be a domain.
In fact, if $x\in R\setminus \{0\}$, then by assumption $P=xR$ is
a finitely generated projective module, so we infer from
Proposition \ref{herders} that ${\rm ann}_R(x)={\rm ann}_R(P)=0$.
Conversely, every Pr\"ufer domain  $R$ satisfies condition (3) in
Proposition \ref{herders}. Indeed, if $T$ is  a non-zero finitely
presented exceptional $R$-module, then $T$ is projective by
\cite[2.2]{CM}. Furthermore, since $T$ is faithful, $R=\tau_T(R)$,
so the universal localization at $T$ is trivial.

(4) Recall that $R$ is a semihereditary ring of  weak global dimension zero \cite[2.32 and 4.21]{Lam}. So, the only-if-part follows from Proposition \ref{herders}. Moreover, all finitely
presented modules are finitely generated projective. But then
every non-zero compact exceptional object in $\Dcal(R)$ is a
direct sum of shifts of finitely generated, projective modules.
Thus we can prove the if-part arguing as in Proposition
\ref{herders}, (3) $\Rightarrow$ (1).
\end{proof}

Wiedemann \cite{W} has shown that derived simplicity with respect
to $\Dcal^b$ is a non-trivial property  for finite dimensional
algebras, going much beyond local algebras; he found an algebra
with two simple modules that is derived simple. Happel \cite{Hap1}
even showed derived simplicity, also with respect to $\Dcal^b$, for a
series of algebras with two simple modules and of finite global
dimension.

\bigskip
\section{A Jordan H\"older theorem for derived categories of
hereditary artin algebras}
\label{sectionproof}

In this section we state and prove the main result of this article, which covers hereditary artin algebras and  algebras which are
derived equivalent to them.

\medskip

Here by a {\em stratification} of the derived category $\Dcal(A)$
of a ring $A$ we mean a sequence of iterated recollements of the
following form: a recollement of $A$, if it is not derived simple,
\[
\xy (-45,0)*{\Dcal(B)}; {\ar (-25,2)*{}; (-35,2)*{}};
{\ar (-35,0)*{}; (-25,0)*{}^{}}; {\ar(-25,-2)*{}; (-35,-2)*{}};
(-15,0)*{\Dcal(A)}; {\ar (5,2)*{}; (-5,2)*{}_{}};
{\ar (-5,0)*{}; (5,0)*{}}; {\ar (5,-2)*{}; (-5,-2)*{}};
(15,0)*{\Dcal(C)};
\endxy
\] and
a recollement of $B$, if it is not derived simple,
\[
\xy (-45,0)*{\Dcal(B_1)}; {\ar (-25,2)*{}; (-35,2)*{}};
{\ar (-35,0)*{}; (-25,0)*{}^{}}; {\ar(-25,-2)*{}; (-35,-2)*{}};
(-15,0)*{\Dcal(B)}; {\ar (5,2)*{}; (-5,2)*{}_{}};
{\ar (-5,0)*{}; (5,0)*{}}; {\ar (5,-2)*{}; (-5,-2)*{}};
(15,0)*{\Dcal(B_2)};
\endxy
\]
and a recollement of $C$, if it is not derived simple,
\[
\xy (-45,0)*{\Dcal(C_1)}; {\ar (-25,2)*{}; (-35,2)*{}};
{\ar (-35,0)*{}; (-25,0)*{}^{}}; {\ar(-25,-2)*{}; (-35,-2)*{}};
(-15,0)*{\Dcal(C)}; {\ar (5,2)*{}; (-5,2)*{}_{}};
{\ar (-5,0)*{}; (5,0)*{}}; {\ar (5,-2)*{}; (-5,-2)*{}};
(15,0)*{\Dcal(C_2)};
\endxy
\]
and recollements of $B_i$ and of $C_i$ ($i=1,2$), if they are not
derived simple, and so on, until we arrive at derived simple rings
at all positions, or continue ad infinitum.

\medskip
\begin{thm} \label{maintheorem}
Let $A$ be derived equivalent to a hereditary artin algebra and let
$S_1, \dots, S_n$ be representatives of the isomorphism classes of
simple $A$-modules. Denote the endomorphism rings by
$D_i:=\End_A(S_i)$. Then $\Dcal(A)$ has a stratification whose $n$
factors are the categories $\Dcal(D_i)$. Any stratification of
$\Dcal(A)$ has precisely these factors, up to ordering and derived
equivalence.
\end{thm}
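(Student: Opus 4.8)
The proof splits into two halves: existence of a stratification with the claimed factors, and uniqueness of these factors up to ordering and derived equivalence. Since $A$ is derived equivalent to a hereditary artin algebra $H$, and both existence and uniqueness of stratifications are invariant under derived equivalence (a derived equivalence $\Dcal(A)\simeq\Dcal(H)$ carries recollements to recollements), I may assume from the outset that $A=H$ is itself hereditary artin. For existence, I would argue by induction on the number $n$ of simple modules. If $A$ is simple artinian then $n=1$, $A$ is derived simple by Proposition~\ref{exdersimple}(2), and $\Dcal(A)=\Dcal(D_1)$ — nothing to do. If $n>1$, then $A$ is not simple artinian, hence by Proposition~\ref{exdersimple}(2) not derived simple, so it admits a nontrivial recollement; by Corollary~\ref{HRSextended-more} (together with Theorem~\ref{single} and Lemma~\ref{r3tor2}) one can arrange this recollement to have outer terms $\Dcal(B)$ and $\Dcal(C)$ with $B$ a hereditary artin algebra obtained from $A$ by a homological epimorphism, and $C=\End_A(X)$ for a suitable compact exceptional object $X$. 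Using Theorem~\ref{HRSextended}, Proposition~\ref{artin}, and the analysis of exceptional sequences in Subsection~\ref{exceptseq}, I would show that $B$ and $C$ between them have strictly fewer than $n$ simple modules in a way that their endomorphism rings, counted together, are exactly $D_1,\dots,D_n$: concretely, reduce to the case where $X$ is indecomposable, so $C$ is a skew-field which by Theorem~\ref{endotilting} (in the form of the to-be-proved Corollary~\ref{endoftiltingcomplex}, or directly from Proposition~\ref{Endofexceptional} applied in $\Dcal(A)$) equals the endomorphism ring of one of the simple $A$-modules, while $B$ picks up precisely the remaining ones by Proposition~\ref{artin}. Then apply the inductive hypothesis to $B$ and to $C$, paste the resulting stratifications below the recollement, and conclude.

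\textbf{Uniqueness.} This is the substantive half. Suppose we are given an arbitrary stratification of $\Dcal(A)$; I want to show its derived-simple factors are, up to permutation and derived equivalence, the $\Dcal(D_i)$. I would again induct on $n$, with the base case $n=1$ trivial since $A$ is derived simple. For $n>1$, fix the first recollement of the given stratification, with outer terms $\Dcal(B)$ and $\Dcal(C)$. The key point is that, because $A$ is hereditary artin, Corollary~\ref{HRSextended-more} and Lemma~\ref{r3tor2} force this recollement to be (equivalent to) one coming from a homological epimorphism $A\to B$ with $B$ again hereditary artin, and with $C=\End_A(X)$ for a compact exceptional $X\in\Dcal(A)$; moreover $X$ is a direct sum of shifts of indecomposable finitely presented exceptional $A$-modules, which by Subsection~\ref{exceptseq} and Proposition~\ref{Endofexceptional} can be completed to a (complete, if $X$ generates) exceptional sequence. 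Hence $C$, as an algebra, is an endomorphism ring of a multiplicity-free exceptional module, whose simple factors' endomorphism rings — by Theorem~\ref{endotilting}/Corollary~\ref{endoftiltingcomplex} — form a subset of $\{D_1,\dots,D_n\}$, and $B$ contributes exactly the complementary subset (this is where the two-variable hereditary combinatorics of Proposition~\ref{artin} and the Bongartz-complement description in Theorem~\ref{HRSextended} get used). By the inductive hypothesis applied to $B$ and to $C$ — each derived equivalent to (a product of) hereditary artin algebras with fewer simples — every stratification of $\Dcal(B)$ has factors the $\Dcal(D_i)$ with $i$ in the $B$-part, and likewise for $C$; splicing gives that the given stratification of $\Dcal(A)$ has the full list $\Dcal(D_1),\dots,\Dcal(D_n)$ as factors, independently of the choices made.

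\textbf{Main obstacle.} The delicate step is not the induction bookkeeping but the rigidity assertion used at each stage: that \emph{every} recollement of $\Dcal(A)$ by derived categories of rings, for $A$ hereditary artin, is essentially forced to split the simple modules (with their division rings) additively into two disjoint blocks. This rests on combining Corollary~\ref{HRSextended-more} (every such recollement of a hereditary artin algebra is, up to equivalence, induced by a homological epimorphism to another hereditary artin algebra and is ``the'' perpendicular-category recollement of a compact exceptional object), with Theorem~\ref{endotilting} extended to tilting complexes (the promised Corollary~\ref{endoftiltingcomplex}) so that endomorphism rings of exceptional summands are controlled, and with Proposition~\ref{artin} guaranteeing that $B$ and $C$ share no simple modules and together exhaust all of them. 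The ``butterfly-lemma'' analogues — Proposition~\ref{completingrecoll} and Proposition~\ref{completingrecoll-dual} — are what make the splicing of lower stratifications compatible regardless of the order in which recollements were taken, so I would invoke them to reconcile two different given stratifications of $\Dcal(A)$ with each other rather than comparing each to the canonical one; but the real work is the structural classification of a single recollement step in the hereditary artin case, and that is where I expect the bulk of the argument to lie.
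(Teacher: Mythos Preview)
Your proposal has a genuine gap in the uniqueness argument, and it stems from misjudging where the real work lies.

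You want to apply the inductive hypothesis to both $B$ and $C$ after a single recollement step. For this you need two things: (i) that $C=\End_A(X)$ is itself derived equivalent to a hereditary artin algebra, and (ii) that the endomorphism rings of the simple $C$-modules form a subset of $\{D_1,\dots,D_n\}$ while those of $B$ form the complementary subset. For (i), nothing in the paper's toolkit gives this directly; Corollary~\ref{HRSextended-more} controls $B$, not $C$. For (ii), you invoke Theorem~\ref{endotilting}/Corollary~\ref{endoftiltingcomplex}. But Theorem~\ref{endotilting} is about tilting \emph{modules}, while here $X$ is only a partial tilting complex, and Corollary~\ref{endoftiltingcomplex} is proved in the paper \emph{as a consequence of Theorem~\ref{maintheorem}}. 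Your argument is circular at exactly the point you flag as the ``main obstacle''.

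The paper avoids both issues by taking the butterfly lemma seriously rather than relegating it to a reconciliation device. The key move is to use Proposition~\ref{completingrecoll} iteratively to rearrange the \emph{entire} given stratification into a single chain
\[
\Dcal(A_m)\hookrightarrow\cdots\hookrightarrow\Dcal(A_2)\hookrightarrow\Dcal(A_1)=\Dcal(A)
\]
induced by homological epimorphisms between hereditary artin algebras, with each successive quotient $\Dcal(E_i)$ a derived-simple factor of the original stratification. Now each $E_i$ is derived simple, and since $E_i\cong\End_A(X)$ for a compact exceptional $X$ with triangular (directed) endomorphism ring, derived simplicity forces $X$ to be \emph{indecomposable}. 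This is the crucial reduction: once $X$ is indecomposable, Proposition~\ref{artin} applies directly to show $A_2$ has exactly $n-1$ simples, the induction runs on the $B$-side only (where hereditariness is guaranteed), and the identification $\{E_1,\dots,E_n\}=\{D_1,\dots,D_n\}$ comes from Ringel's Theorem~\ref{endotilting} for the tilting \emph{module} $T=A_2\oplus X$ --- no appeal to Corollary~\ref{endoftiltingcomplex} needed. The chain form also gives finiteness of any stratification for free (the number of simples strictly drops at each step), which your proposal does not address.

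In short: you treat Proposition~\ref{completingrecoll} as peripheral and the single-step classification as central; the paper does the opposite, and that is what makes the argument go through without circularity.
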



Note that derived equivalence for the skew-fields $D_i$ just means Morita equivalence.

\begin{proof} Without loss of generality, we assume $A$ is hereditary.
We will proceed by induction on the number $n$ of isomorphism classes of
non-isomorphic simple modules of the algebra $A$.

\medskip

Any hereditary
artin algebra has a standard stratification of length $n$ whose
factors are precisely the endomorphism ring of the simple modules.
Indeed, a simple projective module $S_1$ generates an ideal $J_1$ that is
projective on both sides - more precisely it is a heredity ideal and thus
a stratifying ideal - and hence there is a recollement involving $A$,
$D_1 = \End_A(S_1)$ and $A/J_1$, which again is a hereditary artin algebra,
with simples $S_2, \dots, S_n$.

\medskip

For uniqueness we will prove a stronger result by induction using Proposition \ref{completingrecoll}: 
If $A$ is a hereditary artin algebra, any stratification of $\Dcal(A)$ can be rearranged into a finite chain of increasing derived module categories of 
hereditary artin algebras
\vspace{0.1cm}
\[
\xy (-58,0)*{\Dcal(A_n)}; {\ar (-44,2)*{}; (-50,2)*{}}; {\ar
(-50,0)*{}; (-44,0)*{}}; {\ar(-44,-2)*{}; (-50,-2)*{}};
(-34,0)*{\Dcal(A_{n-1})}; {\ar (-18,2)*{}; (-24,2)*{}}; {\ar
(-24,0)*{}; (-18,0)*{}}; {\ar(-18,-2)*{}; (-24,-2)*{}};
(-8,0)*{\ldots}; {\ar (8,2)*{}; (2,2)*{}}; {\ar (2,0)*{};
(8,0)*{}}; {\ar (8,-2)*{}; (2,-2)*{}}; (15,0)*{\Dcal(A_2)}; {\ar
(28,2)*{}; (22,2)*{}}; {\ar (22,0)*{}; (28,0)*{}}; {\ar
(28,-2)*{}; (22,-2)*{}}; (36,0)*{\Dcal(A_1)};
\endxy \vspace{0.1cm}
\]
of length $n$, where $A_1=A$. Moreover this chain is induced by a sequence of homological epimorphisms $A_1\ra A_2 \ra \ldots \ra A_{n-1} \ra A_n$,
such that $A_n$ is derived simple and for each $1\leq i \leq n-1$, 
\vspace{0.1cm}
\[
\xy (-45,0)*{\Dcal(A_{i+1})}; {\ar (-25,2)*{}; (-35,2)*{}};
{\ar (-35,0)*{}; (-25,0)*{}^{}}; {\ar(-25,-2)*{}; (-35,-2)*{}};
(-16,0)*{\Dcal(A_{i})};
\endxy \vspace{0.1cm}
\]
can be completed to a full recollement with the third term being
the derived module category of some derived simple algebra, say, $E_i$. We write $E_n=A_n$ for convenience. 
These $E_i$'s are the endomorphism rings of simple $A$-modules and $\Dcal(E_i)$ ($i=1,\ldots,n$) are precisely the 
derived simple factors in the original stratification.

\medskip

When $n=1$, the hereditary algebra $A$ has only one
simple module. This simple module is also projective, so $A$ is
Morita equivalent to a skew-field, and hence derived simple, cf.~Proposition \ref{exdersimple}.
In the following we assume $n \geq 2$. Suppose a stratification of $\Dcal(A)$ starts with
\[
(1) \qquad \xy (-45,0)*{\Dcal(B)}; {\ar (-25,2)*{}; (-35,2)*{}};
{\ar (-35,0)*{}; (-25,0)*{}^{}}; {\ar(-25,-2)*{}; (-35,-2)*{}};
(-15,0)*{\Dcal(A)}; {\ar (5,2)*{}; (-5,2)*{}_{}};
{\ar (-5,0)*{}; (5,0)*{}}; {\ar (5,-2)*{}; (-5,-2)*{}};
(15,0)*{\Dcal(C)};
\endxy
\]
and, if $C$ is not derived simple,
\[
(2) \qquad  \xy (-45,0)*{\Dcal(C_1)}; {\ar (-25,2)*{}; (-35,2)*{}};
{\ar (-35,0)*{}; (-25,0)*{}^{}}; {\ar(-25,-2)*{}; (-35,-2)*{}};
(-15,0)*{\Dcal(C)}; {\ar (5,2)*{}; (-5,2)*{}_{}};
{\ar (-5,0)*{}; (5,0)*{}}; {\ar (5,-2)*{}; (-5,-2)*{}};
(15,0)*{\Dcal(C_2).};
\endxy
\]
Applying Proposition \ref{completingrecoll}, we can rearrange the
two recollements into
\[
(3)\qquad  \xy (-45,0)*{\Dcal(B')}; {\ar (-25,2)*{}; (-35,2)*{}};
{\ar (-35,0)*{}; (-25,0)*{}^{}}; {\ar(-25,-2)*{}; (-35,-2)*{}};
(-15,0)*{\Dcal(A)}; {\ar (5,2)*{}; (-5,2)*{}_{}};
{\ar (-5,0)*{}; (5,0)*{}}; {\ar (5,-2)*{}; (-5,-2)*{}};
(15,0)*{\Dcal(C_2)};
\endxy
\]
and
\[
(4) \qquad \xy (-45,0)*{\Dcal(B)}; {\ar (-25,2)*{}; (-35,2)*{}};
{\ar (-35,0)*{}; (-25,0)*{}^{}}; {\ar(-25,-2)*{}; (-35,-2)*{}};
(-15,0)*{\Dcal(B')}; {\ar (5,2)*{}; (-5,2)*{}_{}};
{\ar (-5,0)*{}; (5,0)*{}}; {\ar (5,-2)*{}; (-5,-2)*{}};
(15,0)*{\Dcal(C_1)};
\endxy
\]

\vspace{0.2cm}
where $C_1=E$, $C_2=F$ and $B'=G$ in the notation of Proposition \ref{completingrecoll}. Note that under the rearrangement the factors
$\Dcal(B)$, $\Dcal(C_1)$ and $\Dcal(C_2)$ are preserved.
By Theorem \ref{single}, the image $j_!(C_2)$ of $C_2$ under the full embedding on the upper right 
corner of the recollement $(3)$ is compact and exceptional in $\Dcal(A)$. By Corollary \ref{HRSextended-more} we
can then assume that $B'$ is a hereditary artin algebra and the
recollement $(3)$ is induced by a homological epimorphism $A\ra B'$. Using the same argument on the  recollement $(4)$ 
we can assume that $B$ is a hereditary artin algebra and the recollement $(4)$ is induced by a homological epimorphism $B'\ra B$. 

\medskip
The original stratification of $\Dcal(A)$ is given by the recollement $(1)$ and a stratification on $\Dcal(B)$ and 
on $\Dcal(C)$ respectively.
By iterating the above procedure we can transport the recollements in the stratification on $\Dcal(C)$ to the left hand side of 
$\Dcal(A)$ and thus obtain a chain of increasing derived module categories of hereditary artin algebras
\vspace{0.1cm}
\[
\xy (-60,0)*{\Dcal(B)}; {\ar (-44,2)*{}; (-50,2)*{}}; {\ar
(-50,0)*{}; (-44,0)*{}}; {\ar(-44,-2)*{}; (-50,-2)*{}};
(-34,0)*{\Dcal(B_1')}; {\ar (-18,2)*{}; (-24,2)*{}}; {\ar
(-24,0)*{}; (-18,0)*{}}; {\ar(-18,-2)*{}; (-24,-2)*{}};
(-8,0)*{\Dcal(B_2')}; {\ar (8,2)*{}; (2,2)*{}}; {\ar (2,0)*{};
(8,0)*{}}; {\ar (8,-2)*{}; (2,-2)*{}}; (15,0)*{\ldots}; {\ar
(28,2)*{}; (22,2)*{}}; {\ar (22,0)*{}; (28,0)*{}}; {\ar
(28,-2)*{}; (22,-2)*{}}; (38,0)*{\Dcal(A)};
\endxy \vspace{0.1cm}
\]
induced by a sequence of homological epimorphisms $A \ra \ldots
\ra B_2' \ra B_1' \ra  B_0'=B$. The subfactors in the chain are precisely the derived simple 
factors in the stratification of $\Dcal(C)$. Moreover, each $B_i'$ is a partial tilting module over $B_{i+1}'$
($i \ge 0$). Hence the number of non-isomorphic simple modules of
$B_i'$ is strictly smaller than that of $B_{i+1}'$. This implies
that the above chain, and hence the stratification of $\Dcal(C)$, must have finite length. 

\medskip

Since $B$ is a hereditary artin algebra and has a smaller number of non-isomorphic simple modules than $A$, we can apply induction 
and assume the stratification on $\Dcal(B)$ has been rearranged as desired. 
Combining this with the chain obtained in the previous paragraph, we can rearrange
the original stratification on $\Dcal(A)$ into a finite chain of increasing derived module categories of hereditary artin algebras
\[
\xy (-40,0)*{\Dcal(A_m)}; {\ar (-24,2)*{}; (-30,2)*{}}; {\ar
(-30,0)*{}; (-24,0)*{}}; {\ar(-24,-2)*{}; (-30,-2)*{}};
(-17,0)*{\ldots}; {\ar (-4,2)*{}; (-10,2)*{}}; {\ar (-10,0)*{};
(-4,0)*{}}; {\ar (-4,-2)*{}; (-10,-2)*{}}; (6,0)*{\Dcal(A_2)}; {\ar
(22,2)*{}; (16,2)*{}}; {\ar (16,0)*{}; (22,0)*{}}; {\ar
(22,-2)*{}; (16,-2)*{}}; (33,0)*{\Dcal(A_1)};
\endxy \vspace{0.1cm}
\]
of length, say, $m$, induced by a sequence of homological epimorphisms $A=A_1\ra A_2\ra \ldots \ra A_m$, and such that the factors $\Dcal(E_i)$ ($i=1,\ldots,m$) are precisely the derived simple factors 
in the original stratification on $\Dcal(A)$. 

\medskip
Consider the first recollement
\[
\xy (-45,0)*{\Dcal(A_2)}; {\ar (-25,2)*{}; (-35,2)*{}};
{\ar (-35,0)*{}; (-25,0)*{}^{}}; {\ar(-25,-2)*{}; (-35,-2)*{}};
(-15,0)*{\Dcal(A_1)}; {\ar (5,2)*{}; (-5,2)*{}_{j_!}};
{\ar (-5,0)*{}; (5,0)*{}}; {\ar (5,-2)*{}; (-5,-2)*{}};
(15,0)*{\Dcal(E_1)};
\endxy \vspace{0.1cm}
\]
taken from the right hand side of the above filtration. By Theorem
\ref{single}, $X=j_!(E_1)$ is a compact exceptional object in
$\Dcal(A)$.
 We claim that $X$ is indecomposable. Indeed, as
explained in Subsection \ref{exceptseq}, the indecomposable summands of $X$ can be arranged into an exceptional sequence.
Therefore, $X$ has a triangular (directed) endomorphism ring $\End_A(X)\simeq
E_1$. So $E_1$ has a simple projective module, generating a stratifying
ideal $J$ and thus inducing a recollement for $E_1$. But $E_1$ is derived
simple and the recollement must be trivial. Thus $J=E_1$ and $E_1$ is a
simple algebra, which implies the claim.

\medskip

Now since $X$ is an indecomposable, finitely
presented and exceptional module, by Theorem
\ref{HRSextended} $A_2$ can be chosen to be the
hereditary artin algebra obtained from universal localization of $A_1$ at
$X$. Note that $A_2$ has $n-1$ simple modules by Proposition
\ref{artin}. By induction hypothesis, we see that $m=n$
and that the derived simple algebras $E_2,\ldots,E_n$ in the
stratification are the endomorphism rings of the simple
$A_2$-modules.

\medskip
If $X$ is projective, $X/\text{Rad}(X)$ is a simple $A$-module. Therefore up to renumbering we have $\End_A(X/\text{Rad}(X)) \simeq D_1$.
Since $\End_A(X/\text{Rad}(X)) \simeq \End_A(X)$, we have $E_1\simeq D_1$. The simple $A_2$-modules are
precisely those simple $A$-modules that are not isomorphic to
$X/\text{Rad}(X)$,  so $\{{E_1\simeq \End_A(X)},\, E_2,\ldots,\,
E_n\}=\{D_1,\ldots,D_n\}$. If $X$ has projective dimension  one,
then  $(A_2)_A$  complements $X$ to a tilting module $T=A_2\oplus
X$, so by Theorem \ref{endotilting} the endomorphism rings of the
indecomposable summands of $T$ are precisely $D_1,\ldots,D_n$. As
the  endomorphism rings  of the indecomposable summands of
$(A_2)_A$ coincide with the endomorphism rings of the simple
$A_2$-modules, we conclude also in this case that $\{{E_1\simeq
\End_A(X)},\, E_2,\ldots,\, E_n\}=\{D_1,\ldots,D_n\}$. \end{proof}


\medskip

We are now ready to answer the question that has been stated after
Theorem \ref{endotilting},
about the endomorphism rings of indecomposable direct summands in a
tilting complex. Recall that an object $T$ in $\Dcal(A)$ is called a
{\it tilting complex} if $T$ is compact, exceptional, and $\Dcal(A)$
equals $\Tria T$, the smallest triangulated category containing $T$
and closed under small coproducts.

\begin{cor} Let $A$ be a hereditary artin algebra,
and $T$ a multiplicity free tilting
complex in $\Dcal(A)$. Then the endomorphism rings of the
indecomposable direct summands of $T$ are precisely those of the
non-isomorphic simple modules.
\label{endoftiltingcomplex}
\end{cor}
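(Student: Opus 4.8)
The plan is to deduce Corollary \ref{endoftiltingcomplex} directly from the Main Theorem \ref{maintheorem} together with the machinery of Corollary \ref{HRSextended-more}. Given a multiplicity-free tilting complex $T$ in $\Dcal(A)$, write $T=T_1\oplus\cdots\oplus T_n$ with the $T_i$ indecomposable. As recalled in Subsection \ref{exceptseq}, since $A$ is hereditary the indecomposable summands of a compact exceptional object can be reordered into a complete exceptional sequence $(T_1,\dots,T_n)$, and because $T$ is a tilting complex this sequence has exactly $n$ terms, where $n$ is the number of non-isomorphic simple $A$-modules. The endomorphism ring $C=\End_{\Dcal(A)}(T)$ is then a triangular (directed) ring whose "diagonal blocks" are the skew-fields $\End_{\Dcal(A)}(T_i)$ (each $T_i$ being indecomposable exceptional, hence with skew-field endomorphism ring by Proposition \ref{Endofexceptional}, which holds in $\Dcal(A)$ as well).

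Next I would build a stratification of $\Dcal(A)$ whose factors are precisely the $\Dcal(\End_A(T_i))$. Apply Corollary \ref{HRSextended-more} iteratively along the exceptional sequence: peel off $T_n$ to get a recollement of $\Dcal(A)$ by $\Dcal(B_n)$ (a hereditary artin algebra, via a homological epimorphism) and $\Dcal(\End_A(T_n))$; the remaining summands $T_1,\dots,T_{n-1}$ land in $\Dcal(B_n)$ and again form an exceptional sequence, so one continues. Since $\Tria(T)=\Dcal(A)$ and $\Tria(T_i)\cong\Dcal(\End_A(T_i))$, each outer term on the "quotient" side is $\Dcal(D_i')$ with $D_i'=\End_A(T_i)$ a skew-field, hence derived simple (being simple artinian, cf.~Proposition \ref{exdersimple}), and the process terminates after $n$ steps with $\Dcal(B)=0$ on the far left because $T$ generates. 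This produces a stratification of $\Dcal(A)$ of length $n$ with derived simple factors $\Dcal(D_1'),\dots,\Dcal(D_n')$.

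Finally, invoke the uniqueness part of Theorem \ref{maintheorem}: any stratification of $\Dcal(A)$ has, up to ordering and derived equivalence, exactly the factors $\Dcal(D_1),\dots,\Dcal(D_n)$ where $D_i=\End_A(S_i)$ runs over the endomorphism rings of the simple $A$-modules. Comparing with the stratification just constructed, we get that the multiset $\{\Dcal(D_1'),\dots,\Dcal(D_n')\}$ equals $\{\Dcal(D_1),\dots,\Dcal(D_n)\}$ up to derived equivalence; since all these rings are skew-fields, derived equivalence means Morita equivalence, so $\{\End_A(T_1),\dots,\End_A(T_n)\}=\{\End_A(S_1),\dots,\End_A(S_n)\}$ as claimed.

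The main obstacle is the first half: checking carefully that the iterative application of Corollary \ref{HRSextended-more} along a complete exceptional sequence in $\Dcal(A)$ really yields a \emph{stratification} in the precise sense defined before Theorem \ref{maintheorem} — i.e.\ that each outer term is genuinely the derived module category of a ring, that the far-left term vanishes exactly because $\Tria(T)=\Dcal(A)$, and that the factors are exactly the $\Dcal(\End_A(T_i))$ and nothing else. One must ensure that at each stage the $T_i$ for $i$ less than the current index remain an exceptional sequence of finitely presented exceptional modules (up to shift) over the new hereditary artin algebra, so that Corollary \ref{HRSextended-more} applies again; this is essentially the content of the reordering discussion in Subsection \ref{exceptseq} combined with the fact that homological epimorphisms between hereditary artin algebras restrict exceptional objects to exceptional objects. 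Once this bookkeeping is in place, the conclusion is immediate from the uniqueness statement.
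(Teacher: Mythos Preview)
Your proposal is correct and follows essentially the same approach as the paper: arrange the indecomposable summands of $T$ into a complete exceptional sequence, use the iterative procedure from (the proof of) Corollary \ref{HRSextended-more} to produce a stratification of $\Dcal(A)$ with factors $\Dcal(\End_A(T_i))$, and then invoke the uniqueness in Theorem \ref{maintheorem} together with the observation that derived equivalence between skew-fields forces isomorphism. The bookkeeping you flag as an ``obstacle'' is exactly what the proof of Corollary \ref{HRSextended-more} already carries out, so the paper simply cites that proof rather than repeating the iteration.
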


\begin{proof} By (\ref{exceptseq}), the indecomposable direct
summands of $T$ form a complete exceptional sequence, say $(T_1,
T_2, \ldots, T_n)$. From the proof of Corollary
\ref{HRSextended-more}, this exceptional sequence induces a
stratification of $\Dcal(A)$ whose factors are the derived module
categories of $\End_A(T_i)$ ($i=1,2,\ldots, n$). Due to Theorem
\ref{maintheorem}, these endomorphism rings are up to derived
equivalence the endomorphism rings of the non-isomorphic simple
$A$-modules. But for skew-fields, derived equivalence implies Morita
equivalence. The $T_i$ being indecomposable then implies their
endomorphism rings are local and hence isomorphic to those of the
simple modules $S_i$.
\end{proof}

\bigskip
\section{What can fail}
\label{sectionex}

In this Section we first explain why only derived categories of rings
should be permitted as outer terms of recollements in our context. We also
give an example showing that Theorem \ref{maintheorem} fails without
finiteness assumptions - while we do not have examples of failure
for artin algebras in general, that is, when dropping the assumption
'hereditary'.
\bigskip

Which kind of recollements is meaningful when trying to prove a Jordan H\"older
theorem for derived categories of rings? Since recollement is a natural
concept for triangulated categories in general, a natural first choice is
is to admit all triangulated categories as terms in a
recollement. This choice, however, leads to an abundance of recollements,
for instance in the following way: By the second Theorem in \cite[1.6]{AKL},
there exists a recollement of the derived category $\Dcal(R)$ as soon as there
exists an object $T_1$ generating a smashing subcategory. Thus, we may for
instance choose $T_1$ to be a finitely generated $R$-module. Then we will
get a recollement, where on the right hand side we get the triangulated
category generated by $T_1$. Under some assumptions (see \cite{AKL})
this category is equivalent to the derived category of the differential graded
endomorphism algebra of $T_1$ - which is an ordinary algebra only if $T_1$
has no self-extensions. We always get the derived category of
another differential graded algebra on the left hand side. Hence, making
such a generous choice for factors of recollements will imply that there are
few derived simple rings, and it will move the question of derived simplicity
to different kinds of triangulated categories. When considering recollements
on this general level, the terms in a 'composition series' of $\Dcal(R)$
usually will be triangulated categories that are much less accessible than
derived categories of rings, at least by current technology.

\medskip

Moreover, allowing general triangulated categories as factors of recollements
definitely produces counterexamples to a general form of Theorem
\ref{maintheorem}, even for very small and natural examples, as the
following example shows:

\begin{ex}
This example is taken from \cite[Example 5.1]{AKL}, where more
detail is given.

Let $A$ be the Kronecker algebra over an algebraically closed field $k$.
This is a hereditary
algebra with two simple modules, whose derived category is equivalent to
the category of coherent sheaves on a projective line, by \cite{Beilinson}.
It has obvious recollements, where the two factors each are equivalent to the
derived category of $\mathrm{Mod}\text{-}k$, which is clearly derived simple.

However, there is a rather different recollement of the following
form:

\[
\xy (-45,0)*{\Dcal(A_\tube)}; {\ar (-25,2)*{}; (-35,2)*{}};
{\ar (-35,0)*{}; (-25,0)*{}^{}}; {\ar(-25,-2)*{}; (-35,-2)*{}};
(-15,0)*{\Dcal(A)}; {\ar (5,2)*{}; (-5,2)*{}_{}};
{\ar (-5,0)*{}; (5,0)*{}}; {\ar (5,-2)*{}; (-5,-2)*{}};
(15,0)*{\Tria \tube};
\endxy \vspace{0.3cm}
\]
Here,  $A_\tube$ is a simple artinian ring, thus derived simple, but not Morita equivalent to
$k$. And the triangulated category $\Tria \tube$ on the right hand side,
generated by the regular modules, that is by the tubes in the Auslander
Reiten quiver, can
be decomposed further, since there are no maps or extensions between
different tubes - thus we can iterate forming recollements infinitely
many times, producing an infinite derived composition series.

\medskip

{\rm The terms of this recollement are obtained as follows:
We consider the class of indecomposable regular right
$A$-modules $\tube$. By the Auslander-Reiten formula the tilting class
$$\tube^\perp={}^o\tube$$ is the torsion class of all
{\em divisible   modules}. There exists a tilting module $W$
which generates $\tube^\perp$. The module $W$
can be chosen as the direct sum of a set of representatives of the Pr\"ufer
$A$-modules and the generic  $A$-module $G$. Moreover, there is an exact
sequence $$0\to A \to W_0\to W_1\to 0$$ where $W_0\cong G^d$, and $W_1$ is a
direct sum of  Pr\"ufer modules.
Then $W$ is equivalent to the tilting module $A_\tube\oplus A_\tube/A$, and
there is the above recollement,
where $A_\tube\cong \End _A(W_0)\cong {(\End _A(G))}^{d\times d}$.}
\end{ex}

Thus, a general version of \ref{maintheorem} would fail rather dramatically
even in this easy situation.

\bigskip

>From this discussion we can conclude that the question of validity
of a Jordan H\"older theorem has to be restricted to
stratifications with all factors being derived categories of
rings. We are left with the following problem, which like in the
classical situations has a negative answer - of course, some
finiteness assumptions are needed in \ref{maintheorem}.

\medskip
{\it Problem. Given a ring $A$, do all stratifications of $\Dcal(A)$
by derived module categories of rings have the same finite
number of factors, and are these factors the same for all stratifications,
up to ordering and up to derived equivalence?}
\medskip

As to be expected, on this level of generality, the problem has a negative
answer. The next example is a counterexample; it shows that the number of
factors may be infinite.

\medskip
\begin{ex}
Let $k$ be a field, and $A=k^{\mathbb N}$ the direct product of
countably many copies of $k$. Then $\Dcal(A)$ has an
infinitely long stratification. More precisely, it has a recollement
with itself occuring as one factor:
\vspace{0.1cm}
\[
\xy (-45,0)*{\Dcal(A)}; {\ar (-25,2)*{}; (-35,2)*{}}; {\ar
(-35,0)*{}; (-25,0)*{}^{}}; {\ar(-25,-2)*{}; (-35,-2)*{}};
(-15,0)*{\Dcal(A)}; {\ar (5,2)*{}; (-5,2)*{}_{}}; {\ar (-5,0)*{};
(5,0)*{}}; {\ar (5,-2)*{}; (-5,-2)*{}}; (15,0)*{\Dcal(k)};
\endxy
\]
\end{ex}

\smallskip
Let $e_1=(1,0,0,\dots) \in A$ be the idempotent supported on the
first index. Then $e_1A$ is finitely generated projective with  endomorphism ring $k$, and
 the universal localization of $A$ at $e_1A$ is a homological epimorphism since $A$ is von Neumann regular.  By 
 \cite[4.5]{AKL}, $e_1A$ induces a recollement. The ring on the left hand side is $A/\tau_{e_1A}(A)$, which is isomorphic to $A$ itself.



\medskip

More dramatically, uniqueness of factors in a finite stratification can fail and even the length of finite stratifications is not an invariant. Examples have been constructed by Chen and Xi \cite{CX}.

\bigskip


\end{document}